\documentclass[twoside,11pt]{article}
\usepackage[utf8]{inputenc}
\DeclareUnicodeCharacter{200B}{}
\DeclareUnicodeCharacter{FB01}{fi}
\DeclareUnicodeCharacter{FB02}{fl}
\DeclareUnicodeCharacter{FB03}{ffi}
\DeclareUnicodeCharacter{FB04}{ffl}
\DeclareUnicodeCharacter{FB00}{ff}
\usepackage{graphicx,amscd,amsfonts,amsmath,amsthm,amssymb,latexsym,amsfonts,color,mathrsfs}
\usepackage{epsfig,float,epstopdf,array,bm,cite}
\usepackage{cases,array}
\usepackage{multirow}
\usepackage{algorithm,algorithmic}
\usepackage{mathtools}
\usepackage{graphicx}
\usepackage{float}
\usepackage{arydshln}
\usepackage{booktabs,comment}
\usepackage[bookmarksnumbered, colorlinks,plainpages]{hyperref}
\newtheorem{theorem}{\bf Theorem}

\newtheorem{proposition}{\bf Proposition}
\newtheorem{example}{\bf Example}

\newcommand{\cred}[1]{{\color{red}  #1}}

\begin{document}
	\title{\bf A parameterized block LU preconditioning for double saddle point problems }
%
\author{\small\bf   
	Hamed Aslani$^{\dag \thanks{Corresponding author. Email: hamedaslani525@gmail.com}}$, 
	Mehdi Najafi Kalyani$^\ddag$,  
	Davod Khojasteh Salkuyeh 
	\thanks{Emails: m.najafi.uk@gmail.com (M.N. Kalyani), khojasteh@guilan.ac.ir (D.K. Salkuyeh)} 
	\\[2mm]
	\textit{{\small $^\dag$Faculty of Mathematical Sciences, University of Guilan, Rasht, Iran}} \\
	\textit{{\small $^\ddag$Department of Computer Science, University of Pisa, Pisa, Italy}}
}

	\date{}
	\maketitle
	\vspace{-0.5cm}
	\noindent\hrulefill\\
	{\bf Abstract.} We present a biparametric preconditioning technique for large, sparse, non-symmetric, and non-singular double saddle point problems. The preconditioner is induced using a stationary iteration method.	It is also based on a two-parameterized block LU factorization of the coefficient matrix. 	To begin with, some convergence results of the iteration method are derived. Moreover, the spectral features exhibited by the preconditioned matrix are given. Finally, the presented preconditioner is conjuncted with the GMRES method. Numerical results demonstrate the satisfactory performance of the preconditioner.
	
	\noindent{\it \footnotesize \textbf{Keywords}}: {\small sparse matrices, double saddle point, iterative methods, convergence,  preconditioning, spectral analysis, GMRES. }\\
	\noindent
	\noindent{\it \footnotesize AMS Subject Classification}: 65F10, 65F50, 65F08. \\
	
	\noindent\hrulefill\\
	
	\pagestyle{myheadings}\markboth{H. Aslani, M.N. Kalyani, D.K. Salkuyeh}{A parameterized block LU preconditioning for double saddle point problems}
	\thispagestyle{empty}
	
	\section{Introduction}
	In various scientific and engineering applications, such as  mixed finite element method for approximating elastoplasticity problems \cite{25,26,28}, the use of non-matching meshes in finite element methods to solve stationary model elliptic problems with discontinuous coefficients \cite{1}, a method for solving nonlinear elasticity problems using a mixed formulation of various variables \cite{24}, a technique for analyzing coupled-field phenomena over time involving a hybrid approach that integrates both diffuse element and finite element method (DEM-FEM) \cite{27,28},  a class of linear systems, known as block three-by-three linear systems, is obtained as follows
	\begin{equation}\label{EQ1}
		\mathcal{K} {\bf x} \equiv\begin{pmatrix}
			{A} & {B^{\top}} & {0} \\
			{-B} & {0} & {-C^{\top}} \\
			{0} & {C} & {0}
		\end{pmatrix}\begin{pmatrix}
			{x} \\
			{y} \\
			{z}
		\end{pmatrix}=\begin{pmatrix}
			{f} \\
			{-g} \\
			{h} 
		\end{pmatrix}\equiv \mathbf{b},  
	\end{equation}
	where $A\in \mathbb{R}^{n \times n}$ is a symmetric positive definite (SPD) matrix, matrices  $B\in \mathbb{R}^{m \times n }$ and 
 $C\in \mathbb{R}^{l \times m}$  have full row ranks, $f  \in \mathbb{R}^{n}, g \in \mathbb{R}^{m}$, and $h \in \mathbb{R}^{l}$, while ${\bf{x}}$ represents the unknown vector to be solved. 
	The above assumptions ensure the coefficient matrix $\mathcal{K}$ in linear equation \eqref{EQ1} is nonsingular and the system \eqref{EQ1} has a unique solution. Further details on the conditions that guarantee the nonsingularity of the coefficient matrix \eqref{EQ1} can be found in \cite{Xie, Salkuyeh}. 
		Moreover, similar large-scale structured linear systems also arise in computational electromagnetics, such as electromagnetic scattering from multilayered dielectric objects, where the development of efficient iterative solvers and preconditioning techniques is essential for large-scale simulations \cite{r5c1}.
	
	Linear systems of equations in the form
	\begin{equation}\label{2by2}
		\begin{pmatrix}
			{E} & {F^{\top}}  \\
			{-F} & {G}  
		\end{pmatrix}\begin{pmatrix}
			{x} \\
			{y} 
		\end{pmatrix}=\begin{pmatrix}
			{f} \\
			{g} 
		\end{pmatrix},
	\end{equation}
	are known as the generalized saddle point problems, where $E$ is SPD and $G$ is symmetric positive semi-deﬁnite (SPSD). It should be noted that Eq. \eqref{EQ1} cannot be regarded as a special case of  Eq. \eqref{2by2}. Therefore, existing methods for solving Eq. \eqref{2by2} are not applicable to solve Eq. \eqref{EQ1}, as discussed in \cite{2,3,37,38,39,40,41,42,43,44}. Hence, new approaches are needed for solving double saddle point problems of the form  \eqref{EQ1}.  The iterative solution and preconditioning of three-by-three saddle point problems has garnered considerable attention in recent years \cite{Beik1,Beik2,JunLiJIAM,LiJIAM,BeikAML}.  
	Linear systems of the form \eqref{2by2} also appear in three-dimensional magnetostatic problems discretized by edge finite elements, motivating the development of efficient block preconditioners and Krylov subspace methods \cite{R5c2}.

	As it is well-known,  the coefficient matrix ${\mathcal{K}}$  is typically large and sparse, making iterative methods like Krylov subspace methods appropriate for solving Eq. \eqref{EQ1}. To improve computational efficiency, various preconditioning techniques are used.

	Recently, several preconditioners have been developed and thoroughly analyzed to improve the efficiency of solving \eqref{EQ1}. For instance, Abdolmaleki et al. in \cite{Karimi} proposed the block diagonal preconditioner  
\begin{equation*}
	\mathcal{P}_1=\left(\begin{array}{ccc}
		A & 0& 0 \\
		0 & \alpha I + \beta BB^{\top} & 0\\
		0& 0 & \alpha I +\beta CC^{\top}
	\end{array}\right).
\end{equation*} 
Here and in what follows, 
$I$
denotes the identity matrix of the appropriate dimension. 
Aslani and Salkuyeh in \cite{AslaniJIAM} established  the following preconditioner to the system  \eqref{EQ1}
\begin{equation*}\label{DKS01}
	\mathcal{P}_2=\left(\begin{array}{ccc}
		A & B^{\top} & 0 \\
		0 & \alpha I + \beta BB^{\top} & -C^{\top}\\
		0& 0 & \alpha I +\beta CC^{\top}
	\end{array}\right),
\end{equation*}
where $\alpha$ and $\beta$ are two positive numbers. 
During the application of the preconditioner $\mathcal{P}_1$ or $	\mathcal{P}_2$ within a Krylov subspace method, each iteration requires the solution of a linear system of the form $	\mathcal{P}_1 w=r$ or $	\mathcal{P}_2 w=r$. This process involves solving three auxiliary linear systems whose coefficient matrices are $A$, $\alpha I+ B B^{\top}$, and $\alpha I+ C C^{\top}$, respectively. These systems may be solved either exactly by means of the Cholesky factorization or approximately using an iterative solver such as the conjugate gradient (CG) method. Consequently, the efficiency of the overall algorithms are largely determined by the cost of solving these inner systems. For large-scale applications, this cost may dominate the computational effort of each outer Krylov iteration. However, explicitly constructing $\alpha I+ B B^{\top}$ and $\alpha I+ C C^{\top}$
is prohibitively expensive in terms of both computational time and memory requirements. Furthermore, even when the original matrices $A$, $B$, and $C$ are sparse, $\alpha I+ B B^{\top}$ and $\alpha I+ C C^{\top}$ are generally dense, making their storage and manipulation impractical.  A preconditioner of the form 
	\begin{equation*}
	{\mathcal{P}}_3 = \begin{pmatrix}
		{A} & {B^{\top}} & -\frac{1}{\beta}B^{\top} C^{\top} \\
		-{B} & {\alpha I} & {-C^{\top}} \\
		{0} & {C} & 0
	\end{pmatrix},
\end{equation*}
was presented by Li and Li in \cite{LiJIAM}, where $\alpha$ and $\beta$ are two positive numbers.
 A  block diagonal preconditioner $\mathcal{P}_D$ and its inexact version $\hat{\mathcal{P}}_D$, have been studied in \cite{Huang1}, which are in the form
	\begin{equation*}
		{\mathcal{P}}_D = \begin{pmatrix}
			{A} & {B^{\top}} & {0} \\
			{B} & {S} & {0} \\
			{0} & {0} & CS^{-1}C^{\top}
		\end{pmatrix},\qquad
		\hat{\mathcal{P}}_D = \begin{pmatrix}
			\hat{A} & {B^{\top}} & {0} \\
			{B} &  \hat{S}& {0} \\
			{0} & {0} &C\hat{S}^{-1}C^{\top}
		\end{pmatrix},
	\end{equation*}	
	where $S=BA^{-1} B^{\top}$, $\hat{A}$ and $\hat{S}$ are SPD approximations of $A$ and $S$, respectively.
	While the preconditioner  ${\mathcal{P}}_D$ can effectively cluster the eigenvalues of  ${\mathcal{P}}_D^{-1}\mathcal{K}$,  it is quite time-consuming to construct ${\mathcal{P}}_D$ in practical applications. In a separate line of research, Cao \cite{Cao} introduced the shift-splitting iteration method for solving the system  \eqref{EQ1}. The iteration is given by
	\[
	\frac{1}{2}(\alpha I+\mathcal{K})\mathbf{x}^{k+1}=\frac{1}{2}(\alpha I-\mathcal{K})\mathbf{x}^{k}+\mathbf{b}, \quad k=0,1,\dots,
	\]
 which leads to the shift-splitting preconditioner $\mathcal{P}_{SS}$ as
	\begin{equation}\label{preQ01}
		\mathcal{P}_{SS}=\frac{1}{2}\left(\begin{array}{ccc}
			{\alpha I+{A}} & {B^{\top}} & {0} \\
			-{B} &\alpha I& -C^{\top}\\
			0& C & \alpha I
		\end{array}\right),
	\end{equation}
	and a relaxed version of the shift-splitting 
	preconditioner $\mathcal{P}_{RSS}$ as
	\begin{equation}\label{preQ11}
		\mathcal{P}_{RSS}=\frac{1}{2}\left(\begin{array}{ccc}
			A & {B^{\top}} & {0} \\
			-{B} &\alpha I& -C^{\top}\\
			0& C & \alpha I
		\end{array}\right),
	\end{equation}
	where $\alpha $ is a positive constant and $I$ is the identity matrix with suitable size.  
	Although the shift-splitting method is unconditionally convergent, it exhibits several limitations that may restrict its practical efficiency. For instance, each iteration of the shift-splitting method requires solving shifted linear \textcolor{blue}{systems}. These matrices are better conditioned than the original one. However, solving them still constitutes the dominant computational cost in large‑scale problems. Moreover, the method itself suffers from slow convergence, which further undermines its overall efficiency, especially for large dimensions.  
	Dimensional factorization  preconditioners have been  developed for block-structured saddle point systems arising from the discretization of the incompressible Navier–Stokes equations. In particular, the  dimensional factorization preconditioner, its relaxed version 
	\cite{R6comm12}, 
	 and the stabilized dimensional factorization  preconditioner
	 \cite{R6comm22}
	  provide useful insights into factorization-based preconditioning strategies for saddle point problems. Although the block three-by-three structure of the Navier–Stokes systems shares some similarities with the double saddle point problems considered in this work, the underlying matrix blocks and their algebraic properties are fundamentally different. Nevertheless, these works offer valuable perspectives on the broader landscape of factorization-based preconditioning for  block-structured systems. 
	 Wang et al. \cite{Wang} considered block SPD preconditioners and proposed an exact block preconditioner as follows
	\begin{equation*}
		\mathcal{\tilde{P}}  \equiv \mathcal{L} \mathcal{D}_{\alpha,\beta} {\mathcal{L}}^{\top} = \begin{pmatrix}
			{A} & {B^{\top}} & {0} \\
			{B} & {(\alpha+1)S} & {0} \\
			{0} & {0} & {\beta Q}
		\end{pmatrix},
	\end{equation*}	
	where 
	\begin{equation*}
		\mathcal{L} = \begin{pmatrix}
			{I} & {0} & {0} \\
			{B A^{-1}} & {I} & {0} \\
			{0} & {0} & {I}
		\end{pmatrix}, \qquad \mathcal{D}_{\alpha,\beta} = \begin{pmatrix}
			{A} & {0} & {0} \\
			{0} & {\alpha S} & {0} \\
			{0} & {0} & {\beta Q}
		\end{pmatrix},
	\end{equation*}	
	and	$\alpha,\beta$ are positive constants, $S=BA^{-1} B^{\top}$ and $Q=C S^{-1} C^{\top}$. In particular, the preconditioner is inspired by the symmetric indefinite factorization of the coefficient matrix in  Eq. \eqref{EQ1}:
	\begin{equation*}				
		\mathcal{K}  = \mathcal{L}_{1} \mathcal{L}_{2} \mathcal{D} {\mathcal{L}_{2}}^{\top} {\mathcal{L}_{1}}^{\top},
	\end{equation*}	
	in which
	\begin{equation*}
		\mathcal{L}_{1}=  \begin{pmatrix}
			{I} & {0} & {0} \\
			{B A^{-1}} & {I} & {0} \\
			{0} & {0} & {I}
		\end{pmatrix},\quad \mathcal{L}_{2}=  \begin{pmatrix}
			{I} & {0} & {0} \\
			{0} & {I} & {0} \\
			{0} & {-C S^{-1}} & {I}
		\end{pmatrix}, \quad \mathcal{D}=  \begin{pmatrix}
			{A} & {0} & {0} \\
			{0} & {-S} & {0} \\
			{0} & {0} & {Q}
		\end{pmatrix}.
	\end{equation*}	 
	
It should be noted that forming the Schur complements $S$ and $Q$ 
requires solving expensive matrix inversions, making them very time-consuming.	More recently, Balani et al. \cite{Balani0}, introduced and analyzed an exact block preconditioner for solving  Eq. \eqref{EQ1}, which is
	defined as
	\begin{equation}\label{preQ1}
		\mathcal{Q}=\left(\begin{array}{ccc}
			{{A}} & {B^{\top}} & {0} \\
			{B} &S& -C^{\top}\\
			0& C & \alpha I
		\end{array}\right),
	\end{equation}
	where $\alpha $ is a positive constant and $ S=BA^{-1} B^{\top}$.
	Balani et al. \cite{Balani0} and Wang et al. \cite{Wang}
	demonstrated through experiments that their 
	preconditioners are more effective than $\mathcal{P}_{SS},\mathcal{P}_{RSS}$ and 	${\mathcal{P}}_D$.
	
	
	In this paper, we will describe a new approach of block preconditioning that will be used in the Krylov subspace methods like GMRES and FGMRES to solve a linear system of equations \eqref{EQ1},
	\begin{equation*}
		\mathcal{P}=\begin{pmatrix}
			{I} & {0} & {0} \\
			{\frac{-1}{\beta}B A^{-1}} & {I} & {0} \\
			{0} & {\frac{1}{\alpha}C S^{-1}} & {I}
		\end{pmatrix}\begin{pmatrix}
			{A} & {B^{\top}} & {0} \\
			{0} & {\alpha S} & {-C^{\top}} \\
			{0} & {0} & {\frac{1}{\alpha}Q}
		\end{pmatrix},
	\end{equation*}
	where $\alpha$ and $\beta$ are positive constants,  $S=BA^{-1} B^{\top}$ and {$Q=C S^{-1} C^{\top}$. The paper primarily investigates the spectral properties of the two preconditioned matrices ${\mathcal{P}}^{-1}\mathcal{K}$  and $\hat{\mathcal{P}}^{-1}\mathcal{K}$  in detail. 
	Here, $\hat{\mathcal{P}}$ denotes an inexact version of ${\mathcal{P}}$, and various specific formulations of $\hat{\mathcal{P}}$ will be analyzed.
		 We use an analysis approach inspired by works referenced in \cite{Balani0,Wang} and compare the effectiveness of $\tilde{\mathcal{P}},\mathcal{Q}$, $\mathcal{P}_{SS}$, and $\mathcal{P}$.
		We note that, basically, the cost of applying the preconditioners directly to Eq. \eqref{EQ1} is \textcolor{blue}{very} expensive. Therefore, in practical applications, we prefer to use the inexact version of preconditioners.  
		The proposed preconditioner improves the solution process in three precise senses: it yields a more favorable eigenvalue distribution clustered around one, it reduces the number of Krylov iterations in the exact and inexact settings, and it lowers the practical computational cost by avoiding explicit matrix inversions. The main novelty of the paper lies primarily in the new two-parameter block LU structure; the induced stationary iteration and the spectral analysis provide the theoretical foundation, while the inexact implementation makes the method computationally viable.
		
		%
		
		The remaining content of this paper is organized as follows. 
		Our primary objective is to introduce a new iterative method for solving Eq. \eqref{EQ1} and to extract a preconditioner from this method, which will be discussed  in  Section \ref{Sec2}. In Section \ref{Sec3}, we will analyze the spectrum of the preconditioned matrix. Practical implementation of the preconditioner will be given in 
		 Section \ref{Sec_3}. 
		We will also propose some inexact preconditioners for the matrix $\mathcal{K}$  in Section \ref{Sec_3} and study the eigenvalue distribution of the preconditioned matrix.
		Section  \ref{Sec4} will be dedicated to presenting numerical results. Finally, in Section \ref{Sec5}, we will conclude with some brief remarks.
		
		In this paper, the vector $(x^{\top},y^{\top},z^{\top})^{\top}$ is represented in \textsc{Matlab} notation as $(x;y;z)$, where the superscript $\top$ signifies the transpose operation. Throughout the paper, the notation 
		$x^{*}$
		is employed to signify the conjugate transpose  of the vector 
		$x$. 
		A matrix $A \in \mathbb{R}^{n \times n}$ is considered SPD if it satisfies two conditions: $A^{\top}=A$, and for any nonzero $x$ in $\mathbb{R}^{n}$, $x^{\top} A x>0$. A matrix $A$ is referred to as SPSD if it meets the following requirements: $A^{\top}=A$, and $x^{\top}A x $ must be greater than or equal to zero for all $x$ in $\mathbb{R}^{n}.$ 	

		\section{Suggested iterative method and its convergence}\label{Sec2}
	Motivated by the proposed preconditioner in \cite{Liang}, we give the new iteration method using the splitting $\mathcal{K}=\mathcal{P}-\mathcal{R},$ in which
	\begin{equation}\label{pre}
		\mathcal{P}=\begin{pmatrix}
			{I} & {0} & {0} \\
			{\frac{-1}{\beta}B A^{-1}} & {I} & {0} \\
			{0} & {\frac{1}{\alpha}C S^{-1}} & {I}
		\end{pmatrix}\begin{pmatrix}
			{A} & {B^{\top}} & {0} \\
			{0} & {\alpha S} & {-C^{\top}} \\
			{0} & {0} & {\frac{1}{\alpha}Q}
		\end{pmatrix},
	\end{equation}
	\begin{equation}
		\mathcal{R}=\begin{pmatrix}
			{0} & {0} & {0} \\
			{(1-\frac{1}{\beta}) I } & {I} & {0} \\
			{0} &{0} & {0}
		\end{pmatrix}\begin{pmatrix}
			{B} & {0} & {0} \\
			{0} & {(\alpha-\frac{1}{\beta}) S} &  {0}\\
			{0} & {0} &{0}
		\end{pmatrix},
	\end{equation}
	where $S=BA^{-1} B^{\top}$ and {$Q=C S^{-1} C^{\top}.$} Assuming that $\alpha$ and $\beta$ are positive parameters. The resulting matrix $\mathcal{P}$ is nonsingular. The iterative process for this splitting takes the form 
	\begin{equation}\label{scheme}
		{\bf{x}}^{(k+1)}= \mathcal{G} {\bf{x}}^{(k)}+{\bf{c}}, \qquad k=0,1,2, \dots,
	\end{equation}
		where ${\bf{x}}^{(0)}$ is the initial guess, $\mathcal{G}=\mathcal{P}^{-1} \mathcal{R}$ is the iteration matrix and ${\bf{c}}=\mathcal{P}^{-1} \bf{b}.$ To ensure convergence of the iterative scheme \eqref{scheme}, certain conditions must be met. These conditions will be presented using a relevant theorem. 	
		\begin{theorem}\label{th1}
			Suppose that $A$ is \cred{an} SPD matrix, $B$ and $C$ are matrices that have full row rank. If $\alpha>\frac{1}{2}$, then the iterative method \eqref{scheme} will always converge to the unique solution of \eqref{EQ1}, regardless of the initial guess. 
		\end{theorem}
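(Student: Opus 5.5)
The plan is to compute the splitting explicitly, write down the generalized eigenvalue problem $\mathcal{R}\mathbf{u}=\lambda\mathcal{P}\mathbf{u}$, and show that every eigenvalue of $\mathcal{G}=\mathcal{P}^{-1}\mathcal{R}$ is either $0$ or the single value $(\alpha-1)/\alpha$. The hypothesis $\alpha>\frac12$ is then exactly the condition $\rho(\mathcal{G})<1$. Together with the nonsingularity of $\mathcal{K}$ noted after \eqref{EQ1}, the standard theory of stationary iterations gives convergence of \eqref{scheme} to the unique solution for every initial guess.

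\textbf{Step 1: multiply out $\mathcal{P}$.} Using $S=BA^{-1}B^{\top}$ and $Q=CS^{-1}C^{\top}$, the third block row becomes $(0,\;C,\;-\frac1\alpha CS^{-1}C^{\top}+\frac1\alpha Q)=(0,\;C,\;0)$. This gives
\[
\mathcal{P}=\begin{pmatrix} A & B^{\top} & 0\\ -\frac1\beta B & (\alpha-\frac1\beta)S & -C^{\top}\\ 0 & C & 0\end{pmatrix},
\qquad
\mathcal{R}=\mathcal{P}-\mathcal{K}=\begin{pmatrix} 0&0&0\\ (1-\frac1\beta)B & (\alpha-\frac1\beta)S & 0\\ 0&0&0\end{pmatrix},
\]
which agrees with the stated factored form of $\mathcal{R}$. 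Nonsingularity of $\mathcal{P}$ is immediate from its LU form: both triangular factors have nonsingular diagonal blocks $A$, $\alpha S$ and $\frac1\alpha Q$, where $S$ and $Q$ are SPD because of the full row rank assumptions.

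\textbf{Step 2: the eigenvalue equations.} Let $\lambda\neq0$ be an eigenvalue of $\mathcal{G}$ with eigenvector $(x;y;z)\neq0$. The first and third block rows of $\mathcal{R}\mathbf{u}=\lambda\mathcal{P}\mathbf{u}$ read $0=\lambda(Ax+B^{\top}y)$ and $0=\lambda Cy$. Hence $x=-A^{-1}B^{\top}y$, so $Bx=-Sy$, and $Cy=0$.

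Substituting $Bx=-Sy$ into the second block row gives
\[
-(1-\tfrac1\beta)Sy+(\alpha-\tfrac1\beta)Sy=\lambda\bigl(\tfrac1\beta Sy+(\alpha-\tfrac1\beta)Sy-C^{\top}z\bigr).
\]
The $\beta$-terms cancel on both sides, leaving
\[
(\alpha-1)Sy=\lambda(\alpha Sy-C^{\top}z).
\]
Now multiply on the left by $y^{*}$ and use $Cy=0$ to remove the $z$-term. This yields $(\alpha-1)\,y^{*}Sy=\lambda\alpha\,y^{*}Sy$.

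\textbf{Step 3: rule out $y=0$.} If $y=0$, then $x=0$, and the previous equation reduces to $\lambda C^{\top}z=0$. Since $C$ has full row rank, this forces $z=0$, contradicting $\mathbf{u}\neq0$. Therefore $y\neq0$, so $y^{*}Sy>0$ because $S$ is SPD, and we conclude $\lambda=(\alpha-1)/\alpha$.

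\textbf{Conclusion.} We have $\rho(\mathcal{G})\le|1-\frac1\alpha|$, and $|1-\frac1\alpha|<1$ holds precisely when $\alpha>\frac12$, independently of $\beta>0$. The main obstacle is really only Step 1: recognizing that the LU product collapses to this simple $\mathcal{P}$, since the cancellation of the $(3,3)$ block is what makes $Cy=0$ available. After that, the only delicate point is the $y=0$ case handled in Step 3.
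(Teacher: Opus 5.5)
Your proof is correct and follows the paper's argument essentially step for step. You use the same explicit forms of $\mathcal{P}$ and $\mathcal{R}$, the same reduction to $x=-A^{-1}B^{\top}y$ and $Cy=0$, the same exclusion of $y=0$ via the full row rank of $C$, and reach the same conclusion $\lambda=1-\frac{1}{\alpha}$ after multiplying by $y^{*}$.

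One small overstatement: $\alpha>\frac12$ is not \emph{exactly} equivalent to $\rho(\mathcal{G})<1$ in general. If $l=m$, so that $\mathrm{null}(C)=\{0\}$, every eigenvalue of $\mathcal{G}$ is $0$. Your conclusion, however, only uses $\rho(\mathcal{G})\le|1-\frac{1}{\alpha}|$, which is all the sufficiency statement requires.
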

		\begin{proof}
			Obviously,
			\begin{equation*}
				\mathcal{P}=\begin{pmatrix}
					{A} & {B^{\top}} & {0} \\
					{-\frac{1}{\beta} B} & {(\alpha-\frac{1}{\beta})S} & {-C^{\top}} \\
					{0} & {C} & {0}
				\end{pmatrix}, \quad
				\mathcal{R}=\begin{pmatrix}
					{0} & {0} & {0} \\
					{(1-\frac{1}{\beta}) B} & {(\alpha-\frac{1}{\beta})S} & {0} \\
					{0} & {0} & {0}
				\end{pmatrix}.
			\end{equation*}
			Assuming that $\lambda$ is any eigenvalue of the iteration matrix $\mathcal{G}=\mathcal{P}^{-1}\mathcal{R}$. Let $v$ be the corresponding eigenvector of $\mathcal{G}$ expressed as $(v_1;v_2;v_3).$ We can conclude that $\mathcal{R}v=\lambda \mathcal{P}v$, which is equivalent to 
			\begin{align}
				&\lambda (A v_1 +B^{\top} v_2)=0, \label{eq22}\\
				&\lambda(-\frac{1}{\beta} Bv_1 +(\alpha-\frac{1}{\beta})S v_2 -C^{\top} v_3)=(1-\frac{1}{\beta})B v_1+(\alpha-\frac{1}{\beta})S v_2  \label{eq23}\\
				&\lambda C v_2 =0. \label{eq24}
			\end{align}
			If $\lambda=0,$ there is nothing to prove. So, suppose that $\lambda \neq 0.$ 
			If $v_2 =0,$ based on Eq. \eqref{eq22} and Eq. \eqref{eq23}, both vectors $v_1$ and $v_3$ are also zero since $A$ is SPD and $C$ has full row rank. This conclusion contradicts the fact that $v$ is an eigenvector. Therefore, it must be true that $v_2$ is nonzero.  Now, from Eq.  \eqref{eq22} and as long as positive { definiteness} of $A$, we have $v_1 =-A^{-1} B^{\top} v_2.$ Substituting deduced $v_1$ into Eq. \eqref{eq23}  and multiplying both sides from the left by $v_{2} ^{*}$, gives	
			\begin{equation}\label{eqn1}
			\lambda (\frac{1}{\beta} v_2 ^{*} S v_2 + (\alpha-\frac{1}{\beta}) v_2 ^{*} S v_2-v_{2}^{*}  C^{\top} v_3)= (\frac{1}{\beta} -1) v_2 ^{*} S v_2+ (\alpha-\frac{1}{\beta}) v_2 ^{*} S v_2.
			\end{equation}	 
			By combining Eqs. \eqref{eq24} and \eqref{eqn1}, we have
			$$\lambda (\alpha v_2 ^{*} S v_2) =(\alpha-1) v_2 ^{*} S v_2 .
			$$
			So, $\lambda=1-\frac{1}{\alpha}$, and for the convergence, we need to have $|1-\frac{1}{\alpha}|<1$, which is equivalent to $\alpha>\frac{1}{2}.$
		\end{proof}

		\section{The spectral properties of the preconditioned matrix}\label{Sec3}
		The aim of this section is to obtain the eigenvalues and their corresponding eigenvectors for the preconditioned matrix. The results are summarized in the following theorem.
		\begin{theorem}\label{ttth2}
			Suppose that the { requirements} stated on $A,B,$ and $C$  in Theorem \ref{th1} are satisfied. Then, the eigenpairs of $\mathcal{P}^{-1} \mathcal{K}$, which is denoted by $(\lambda, {\bf{v}}=(v_1;v_2;v_3))$,  falls into the following cases:\\
			{\it{Case I}:} $\lambda=1$, and the corresponding eigenvector is $(v_1;0;v_3)$ in which $v_1$ and $v_3$ arbitrary  but not simultaneously zero if $\beta=1$ and $\alpha \neq1.$ Moreover, when $\alpha=\beta=1$, the corresponding eigenvector presents as $(v_1;v_2;v_3)$, where at least one of $v_i$'s  is nonzero. In addition, if $\alpha \beta=1,$ the corresponding eigenvectors are $(v_1;v_2;v_3)$ during which $v_1 \in\mathrm{null}(B)$ and at least one of $v_i$'s is nonzero. Finally,  $(v_1;\frac{1-\beta}{\alpha \beta -1} S^{-1} B v_1;v_3)$ where $v_1$ and  $v_3$ are not simultaneously zero is the eigenvector in the case of $\alpha \beta \neq 1.$

			\noindent{\it{Case II}:} $\lambda=\frac{1}{\alpha}$, with the corresponding eigenvector $(-A^{-1}B^{\top} v_2;v_2;v_3),$ where $v_{2}\neq0.$		
		\end{theorem}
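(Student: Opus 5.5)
Since $\mathcal{K}=\mathcal{P}-\mathcal{R}$, we have $\mathcal{P}^{-1}\mathcal{K}=I-\mathcal{G}$, where $\mathcal{G}=\mathcal{P}^{-1}\mathcal{R}$. So $\mathcal{P}^{-1}\mathcal{K}\mathbf{v}=\lambda\mathbf{v}$ is equivalent to the generalized problem $\mathcal{R}\mathbf{v}=(1-\lambda)\mathcal{P}\mathbf{v}$. Writing $\mu=1-\lambda$ and using the explicit forms of $\mathcal{P}$ and $\mathcal{R}$ from the proof of Theorem \ref{th1}, this becomes the system \eqref{eq22}--\eqref{eq24} with $\lambda$ replaced by $\mu$. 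I will split according to whether $\mu=0$ or $\mu\neq 0$.

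\textbf{Case $\mu\neq0$.} The argument of Theorem \ref{th1} applies verbatim.
\begin{itemize}
\item If $v_2=0$, then \eqref{eq22} and the $\mathcal{R}$-side of \eqref{eq23} force $v_1=0$ and $C^{\top}v_3=0$, hence $v_3=0$, which is impossible.
\item So $v_2\neq0$, and $v_1=-A^{-1}B^{\top}v_2$.
\item Multiplying \eqref{eq23} on the left by $v_2^*$ and using $Cv_2=0$ gives $\mu=1-\frac{1}{\alpha}$, that is, $\lambda=\frac{1}{\alpha}$.
\end{itemize}
To describe the eigenvector fully I will return to the vector equation \eqref{eq23} itself, not its projection. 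Substituting $Bv_1=-Sv_2$ and $\mu=1-\frac1\alpha$, the $Sv_2$ terms should cancel and leave $C^{\top}v_3=0$, hence $v_3=0$. This checks which $v_3$ are admissible in Case II, since the statement lists $v_3$ as free. Together with $v_2\in\mathrm{null}(C)$ from \eqref{eq24}, this gives the Case II eigenvectors $(-A^{-1}B^{\top}v_2;v_2;v_3)$ with $v_2\neq0$. I will report whatever constraint on $v_3$ the calculation actually produces. When $\alpha=1$ this case merges with $\lambda=1$.

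\textbf{Case $\mu=0$ ($\lambda=1$).} The condition is simply $\mathcal{R}\mathbf{v}=0$, i.e.
\[
(1-\tfrac1\beta)Bv_1+(\alpha-\tfrac1\beta)Sv_2=0,
\]
with $v_3$ completely free. I will read off the subcases from this single equation.
\begin{itemize}
\item $\beta=1$, $\alpha\neq1$: the equation becomes $(\alpha-1)Sv_2=0$, so $v_2=0$ and $v_1$ is arbitrary.
\item $\alpha=\beta=1$: $\mathcal{R}=0$, so every nonzero vector is an eigenvector.
\item $\alpha\beta=1$: the $S$ term vanishes and the equation becomes $(1-\frac1\beta)Bv_1=0$. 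For $\beta\neq1$ this means $v_1\in\mathrm{null}(B)$, with $v_2$ free.
\item $\alpha\beta\neq1$: solve to get $v_2=\frac{1-\beta}{\alpha\beta-1}S^{-1}Bv_1$, using that $S$ is SPD because $B$ has full row rank. Here $v_1$ and $v_3$ must not both be zero.
\end{itemize}

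\textbf{Main obstacle.} The algebra is routine; the main difficulty is the bookkeeping. I need to check that the subcases of Case I are consistent with each other: the $\beta=1$ subcase is a special case of the $\alpha\beta\neq1$ formula. I also need to confirm the Case II eigenvector description, in particular the constraint on $v_3$ that comes from the full vector equation \eqref{eq23}.
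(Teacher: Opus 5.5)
Your proposal is correct and follows essentially the paper's argument: the relabeling $\mu=1-\lambda$ via $\mathcal{G}$ just reproduces the block system of $\mathcal{K}\mathbf{v}=\lambda\mathcal{P}\mathbf{v}$, split into $\lambda=1$ and $\lambda\neq1$, with the $v_2^*$-projection and $Cv_2=0$ giving $\lambda=1/\alpha$. Your extra step of returning to the full vector equation is also right and sharper than what the paper records, since for $\lambda=1/\alpha\neq1$ it forces $v_3=0$ and $v_2\in\mathrm{null}(C)$, so the statement's free $v_3$ is only a loose necessary form.
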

		\begin{proof}
			Suppose that ${\bf{v}}=(v_1;v_2;v_3)$ represents an eigenvector with the eigenvalue $\lambda$ for the preconditioned matrix.  It means that $\mathcal{P}^{-1} \mathcal{K} {\bf{v}}=\lambda {\bf{v}},$ or $ \mathcal{K} {\bf{v}}=\lambda  \mathcal{P}{\bf{v}},$ equivalently,
			\begin{align}{}
				&(1-\lambda) (A v_1 +B^{\top} v_2)=0, \label{eq26}\\
				&(-1+\frac{\lambda}{\beta}) B v_1+(-1+\lambda) C^{\top} v_3=\lambda (\alpha-\frac{1}{\beta}) Sv_2, \label{eq27}\\
				&(1-\lambda)Cv_2=0 \label{eq28}.
			\end{align}
			Initially, let $\lambda=1.$ In this case, from \eqref{eq27} we get 
			\begin{equation}\label{eq29}
				(-1+\frac{1}{\beta})B v_1=(\alpha-\frac{1}{\beta})Sv_2.
			\end{equation}
			Here, we suppose two occasions for $\beta$, namely $\beta=1$ and $\beta \neq 1.$ As $\beta=1,$ 
			\eqref{eq29} concludes $(\alpha-1) S v_2=0.$ For $\alpha \neq 1,$ we have $v_2=0$ and consequently $(v_1;0;v_3)$ is the corresponding eigenvector, in which $v_1$ and $v_3$ are not zero concurrently. Also, if $\alpha=1,$ $\mathcal{K}=\mathcal{P},$ and each vector in the shape of $(v_1;v_2;v_3)$ as long as at least one of the {components} is nonzero vector can be served as an eigenvector.    
			
			In the following, let $\beta\neq1.$ Eq. \eqref{eq27} implies
			$$(-1+\frac{1}{\beta}) Bv_1=(\alpha-\frac{1}{\beta}) S v_2,$$
			or
			$$(1-\beta) B v_1=(\alpha \beta -1) Sv_2.$$ 
			Now, suppose that $\alpha \beta=1.$ Then, $B v_1 =0$, and the corresponding eigenvector is $(v_1;v_2;v_3),$ in which $v_1 \in $ {null($B$)}. On the other hand, if $\alpha \beta\neq 1,$ we have $v_2= \frac{1-\beta}{\alpha \beta-1} S^{-1} B v_1,$  due to the positive definiteness of $S.$ Overally, $(v_1;\frac{1-\beta}{\alpha \beta-1} S^{-1} B v_1;v_3 )$ is the corresponding eigenvector, with taking the assumption $v_1$ and $v_3$ are not zero at the same time. Finally, let $\lambda \neq 1.$ we can infer that $v_1=-A^{-1}B^{\top}v_{2},$ using Eq.  \eqref{eq26}. 
			In addition, by \eqref{eq27}  we get 
			\begin{equation}\label{eq30} 
				Sv_2 -C^{\top} v_3= \lambda (\alpha Sv_2 -C^{\top} v_3).
			\end{equation}
			Note that $v_2 \neq 0,$ otherwise $v_1=0$ and by Eq. \eqref{eq30}, $v_3 =0,$ and it is impossible. So, \eqref{eq30} is rewritten as follows by multiplying both sides from the left by ${v_2}^{*}$
			$$(1-\lambda \alpha) {v_2}^{*} S v_2=(1-\lambda) {{v_2}^{*} }C^{\top} v_3,$$
			and noting that Eq. \eqref{eq28}, results in $ (1-\lambda \alpha) {v_2}^{*} S v_2.$ Therefore, $\lambda=\frac{1}{\alpha}$ and ${\bf{v}}=(-A^{-1}B^{\top} v_2;v_2;v_3)$, with $v_2 \neq 0.$ 
		\end{proof}

		{
			\begin{proposition}\label{ppp1}
				Suppose that the preconditioner $\mathcal{P}$ is defined as Eq. \eqref{pre}. Then, the preconditioned matrix   $\mathcal{P}^{-1}\mathcal{K}$ satisfies the polynomial equation  $(\lambda-1)^2(\lambda-\frac{1}{\alpha})=0$. 
				Additionally, when $\alpha=1$,  the minimal polynomial of $\mathcal{P}^{-1}\mathcal{K}$ has degree 2, and the GMRES method will  converge to the exact solution within two iterations.
			\end{proposition}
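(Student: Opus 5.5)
The plan is to work with the iteration matrix $\mathcal{G}=\mathcal{P}^{-1}\mathcal{R}=I-\mathcal{P}^{-1}\mathcal{K}$ from Section \ref{Sec2}, rather than with $\mathcal{P}^{-1}\mathcal{K}$ directly. Write $T=\mathcal{P}^{-1}\mathcal{K}=I-\mathcal{G}$. Then
\[
(T-I)^2\Big(T-\tfrac{1}{\alpha}I\Big)=-\mathcal{G}^2\Big(\big(1-\tfrac{1}{\alpha}\big)I-\mathcal{G}\Big).
\]
So the claimed polynomial identity is equivalent to
\[
\mathcal{G}^3=\big(1-\tfrac{1}{\alpha}\big)\mathcal{G}^2 .
\]
This is consistent with Theorem \ref{th1}, where the only nonzero eigenvalue of $\mathcal{G}$ turned out to be $1-\frac{1}{\alpha}$.

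\textbf{Step 1: the action of $\mathcal{G}$.} From the explicit forms of $\mathcal{P}$ and $\mathcal{R}$ in the proof of Theorem \ref{th1}, $\mathcal{R}$ has only its second block row nonzero. For ${\bf v}=(v_1;v_2;v_3)$ we get $\mathcal{G}{\bf v}=\mathcal{P}^{-1}(0;w;0)$, where
\[
w=\big(1-\tfrac{1}{\beta}\big)Bv_1+\big(\alpha-\tfrac{1}{\beta}\big)Sv_2 .
\]
Set $u=(u_1;u_2;u_3)=\mathcal{G}{\bf v}$ and solve $\mathcal{P}u=(0;w;0)$ block by block:
\begin{itemize}
\item The first block row gives $u_1=-A^{-1}B^{\top}u_2$.
\item Substituting into the second block row, the term $-\frac{1}{\beta}Bu_1=\frac{1}{\beta}Su_2$ cancels the $-\frac{1}{\beta}S$ part. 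This leaves $\alpha Su_2-C^{\top}u_3=w$.
\item The third block row gives $Cu_2=0$.
\end{itemize}
Hence every vector in the range of $\mathcal{G}$ satisfies $u_1=-A^{-1}B^{\top}u_2$ and $Cu_2=0$. This solve is well posed because $S$ and $Q$ are SPD under the full-rank assumptions.

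\textbf{Step 2: apply $\mathcal{G}$ a second time.} Take $u$ as the input. Its $w$-vector becomes
\[
w'=\big(1-\tfrac{1}{\beta}\big)Bu_1+\big(\alpha-\tfrac{1}{\beta}\big)Su_2=-\big(1-\tfrac{1}{\beta}\big)Su_2+\big(\alpha-\tfrac{1}{\beta}\big)Su_2=(\alpha-1)Su_2 .
\]
So $u'=\mathcal{G}u$ solves $\alpha Su_2'-C^{\top}u_3'=(\alpha-1)Su_2$ together with $Cu_2'=0$. The candidate $u_2'=(1-\frac{1}{\alpha})u_2$, $u_3'=0$ satisfies both equations, since $Cu_2=0$. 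By uniqueness (nonsingularity of $\mathcal{P}$) it is the solution, and then $u_1'=(1-\frac{1}{\alpha})u_1$.

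Thus for $u\in\mathrm{range}(\mathcal{G})$ we have $\mathcal{G}u=(1-\frac{1}{\alpha})(u_1;u_2;0)$. In particular $\mathcal{G}^2{\bf v}$ always has zero third block. Applying Step 2 once more to $\mathcal{G}^2{\bf v}$, whose third block is already zero, gives $\mathcal{G}^3{\bf v}=(1-\frac{1}{\alpha})\mathcal{G}^2{\bf v}$. This is the required identity. The main care needed here is exactly this third-block bookkeeping: $\mathcal{G}$ is \emph{not} a scalar multiple on its range, which is why the polynomial has the factor $(\lambda-1)^2$ rather than $(\lambda-1)$.

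\textbf{Step 3: the case $\alpha=1$.} Step 2 then gives $\mathcal{G}^2=0$, i.e. $(T-I)^2=0$, so the minimal polynomial of $T$ divides $(\lambda-1)^2$. It has degree exactly $2$ whenever $\mathcal{G}\neq0$, that is, whenever $T\neq I$. With $\alpha=1$ and $\beta\neq1$ we have $w=(1-\frac{1}{\beta})(Bv_1+Sv_2)$, which is not identically zero, so $\mathcal{G}\neq0$.

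In the degenerate case $\alpha=\beta=1$ we get $\mathcal{P}=\mathcal{K}$, and the degree drops to $1$; I will note this as a remark. In either case GMRES converges in at most two iterations, by the standard fact that GMRES terminates in at most as many steps as the degree of the minimal polynomial of the preconditioned matrix.
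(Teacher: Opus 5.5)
Your argument is correct, and it takes a different route from the paper's.

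\textbf{The paper's route.} The paper computes $\mathcal{P}^{-1}$ explicitly from the block LU factors and forms $M=\mathcal{K}\mathcal{P}^{-1}$. This matrix has the identity's first and third block rows, $(2,2)$ block $\frac{1}{\alpha}I-\frac{1-\alpha}{\alpha}\Pi$ with $\Pi=C^{\top}Q^{-1}CS^{-1}$, and $(2,3)$ block $(1-\alpha)C^{\top}Q^{-1}$. The paper then stops at a block expansion of $\det(\lambda I-M)$. That confines the spectrum to $\{1,\frac{1}{\alpha}\}$. However, a characteristic polynomial alone does not show that $(\lambda-1)^2(\lambda-\frac{1}{\alpha})$ annihilates the matrix, and it says nothing about the degree of the minimal polynomial. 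To finish along that route, one would still need $\Pi^2=\Pi$ and $(I-\Pi)C^{\top}=0$ to verify $(M-I)^3=\frac{1-\alpha}{\alpha}(M-I)^2$.

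\textbf{Your route.} You never invert $\mathcal{P}$. Because $\mathcal{R}$ is supported in a single block row, you can describe $\mathrm{range}(\mathcal{G})$. You then show that $\mathcal{G}$ acts on this range as $(1-\frac{1}{\alpha})$ times the map that zeroes the third block. This gives the matrix identity $\mathcal{G}^3=(1-\frac{1}{\alpha})\mathcal{G}^2$ directly, and for $\alpha=1$ it gives $\mathcal{G}^2=0$. That is exactly what the GMRES termination claim requires.

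\textbf{What each approach buys.} The paper's explicit block form gives multiplicity information that your argument does not. Since $\Pi$ is an idempotent of rank $l$, the eigenvalue $\frac{1}{\alpha}$ (for $\alpha\neq1$) has algebraic multiplicity $m-l$. Your argument, in turn, actually establishes the annihilating-polynomial identity, which the paper's determinant computation leaves unproved.

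\textbf{Minor remarks.} The sign in your opening display should be $+$, since $(T-I)^2=\mathcal{G}^2$; this does not affect the equivalence. Your observation that the degree drops to $1$ when $\alpha=\beta=1$ is a genuine correction to the statement. It should say ``degree at most 2'', with degree exactly 2 when $\beta\neq1$.
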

			\begin{proof}
				A straightforward calculation yields 
				\[
				\mathcal{P}^{-1}=
				\begin{pmatrix}
					A^{-1}&-\frac{1}{\alpha}A^{-1}B^{\top}S^{-1}&-A^{-1}B^{\top}S^{-1}C^{\top}Q^{-1}\\
					0&\frac{1}{\alpha}S^{-1}&S^{-1}C^{\top}Q^{-1}\\
					0&0&\alpha Q^{-1}
				\end{pmatrix}
				\begin{pmatrix}
					I&0&0\\
					\frac{1}{\beta}BA^{-1}&I&0\\
					-\frac{1}{\alpha\beta}CS^{-1}BA^{-1}&-\frac{1}{\alpha}CS^{-1}&I
				\end{pmatrix}.
				\]
				Since the eigenvalues of $\mathcal{K}\mathcal{P}^{-1}$ and $\mathcal{P}^{-1}\mathcal{K}$ are equal, a simple calculation shows that
				\[
				\mathcal{K}\mathcal{P}^{-1}=
				\begin{pmatrix}
					I&0&0\\
					\Psi&\frac{1}{\alpha}I-\frac{(1-\alpha)}{\alpha}C^{\top}Q^{-1}CS^{-1}&(1-\alpha)C^{\top}Q^{-1}\\
					0&0&I
				\end{pmatrix}.
				\]
				We omit writing $\Psi$ here because it is irrelevant to the proof.  The characteristic polynomial of above matrix  is defined as
				\begin{align*}
					\det(\lambda I-\mathcal{K}\mathcal{P}^{-1})=(\lambda-1)^n\left|   
					\begin{matrix}
						(\lambda-\frac{1}{\alpha})I+\frac{(1-\alpha)}{\alpha}C^{\top}Q^{-1}CS^{-1}&(\alpha-1)C^{\top}Q^{-1}\\
						0&(\lambda -1)I
					\end{matrix}
					\right|,
				\end{align*}
				which completes the proof.
			\end{proof}
			\section{Inexact block preconditioners for the matrix $\mathcal{K}$}\label{Sec_3}
			In this section, we will explore an inexact version of the preconditioner introduced in Section \ref{Sec2}. We have three different variations of the mentioned preconditioner, depending on whether we use approximations $\hat{A}$, or both $\hat{A}$ and $\hat{S}$, or  all three $\hat{A},\hat{S}$ and $\hat{Q}$. We will describe the eigenvalue distribution of each preconditioned matrix in the form of $\hat{\mathcal{P}}^{-1}\mathcal{K}$ separately. 
			
			Let us consider the preconditioner $\hat{\mathcal{P}}$, which is defined as \begin{equation}\label{24}
				\hat{\mathcal{P}}=\begin{pmatrix}
					{I} & {0} & {0} \\
					{\frac{-1}{\beta} B \hat{A}^{-1}} & {I} & {0} \\
					{0} & {\frac{1}{\alpha}C \hat{S}^{-1}} & {I}
				\end{pmatrix}\begin{pmatrix}
				\hat {A} & {B^{\top}} & {0} \\
					{0} &  {\alpha \hat{ S}} & {-C^{\top}} \\
					{0} & {0} & {\frac{1}{\alpha}\hat{Q}}
				\end{pmatrix},
			\end{equation}
			In the following, we will present three  inexact  versions of the preconditioner $\hat{\mathcal{P}}$:
			\begin{description}
				\item[Case I:] If we work with an approximation $\hat{A}$, the preconditioner given in Eq. \eqref{24} takes the  form
				\begin{equation}\label{25}
					\hat{\mathcal{P}}=\begin{pmatrix}
						\hat{A} & B^{\top}&0\\
						-\frac{1}{\beta}B&(\alpha-\frac{1}{\beta}) B\hat{A}^{-1} B^{\top}& -C^{\top} \\
						{0} &C &0
					\end{pmatrix}.
				\end{equation}
				\item[Case II:] If we work with approximations $\hat{A}$, and $\hat{S}$,  the preconditioner given in Eq. \eqref{24} is represented by
				\begin{equation}\label{26}
					\hat{\mathcal{P}}=\begin{pmatrix}
						\hat{A} & B^{\top}&0\\
						-\frac{1}{\beta}B&\alpha \hat{S}-\frac{1}{\beta}B\hat{A}^{-1} B^{\top}& -C^{\top} \\
						{0} &C &0
					\end{pmatrix}.
				\end{equation}
				\item[Case III:] If we work with approximations  $\hat{A},\hat{S}$ and $\hat{Q}$, the preconditioner given in Eq. \eqref{24} is represented by
				\begin{equation}\label{27}
					\hat{\mathcal{P}}=\begin{pmatrix}
						\hat{A} & B^{\top}&0\\
						-\frac{1}{\beta}B&\alpha \hat{S}-\frac{1}{\beta}B\hat{A}^{-1} B^{\top}& -C^{\top} \\
						{0} &C &\frac{1}{\alpha}\hat{Q}-\frac{1}{\alpha}C \hat{S}^{-1}C^{\top}
					\end{pmatrix}.
				\end{equation}
			\end{description}
			We will now characterize the eigenvalues of the preconditioned matrix $\hat{\mathcal{P}}^{-1}\mathcal{K}$ using the preconditioners \eqref{25} and \eqref{27}. 
			In a similar manner, we can also characterize the eigenvalues of the preconditioned matrix with the  preconditioner \eqref{26}.

			\begin{theorem}
				Suppose that $A$ is a SPD matrix, $B$ and $C$ are matrices with full row rank. Let $\hat{A}$ be the SPD approximation of ${A}$. Define $\hat{\mathcal{P}}$ as in \eqref{25}. 
				If $\lambda$ is an eigenvalue of the preconditioned matrix $\hat{\mathcal{P}}^{-1}\mathcal{K}$ with the eigenvector $(x; y; z)$, and
				$\gamma^A:=\lambda(\hat{A}^{-1}A)\in [\gamma_{\min}^A,\gamma_{\max}^A]$, then
				the real eigenvalues of the preconditioned matrix $\hat{\mathcal{P}}^{-1}\mathcal{K}$ are located in the interval 
				\[
				\left[\mu,\frac{1}{\alpha}( 1+ \frac{1}{\beta}+(\alpha-\frac{1}{\beta})\gamma_{\max}^A)\right],
				\]
				where 
				$ \alpha>\frac{1}{\beta}$ and $\mu>0$.
				Furthermore, the complex eigenvalues are located  in a circle embedded in the complex plane with a radius of 1 and a center at 1.
			\end{theorem}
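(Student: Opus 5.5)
We set up the generalized eigenvalue problem $\mathcal{K}{\bf v}=\lambda\hat{\mathcal{P}}{\bf v}$ with ${\bf v}=(x;y;z)\neq0$ and $\hat{\mathcal{P}}$ as in \eqref{25}. Blockwise this gives
\begin{align*}
&(1-\lambda)B^{\top}y=\lambda\hat{A}x-Ax,\\
&(\tfrac{\lambda}{\beta}-1)Bx+(\lambda-1)C^{\top}z=\lambda(\alpha-\tfrac{1}{\beta})B\hat{A}^{-1}B^{\top}y,\\
&(1-\lambda)Cy=0 .
\end{align*}
If $\lambda=1$ there is nothing to prove, since $1$ lies in the stated interval (we will check this from the bound) and on the closed disc. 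So we assume $\lambda\neq1$. Then $Cy=0$.

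Next we exclude degenerate eigenvectors, in the spirit of Theorems \ref{th1} and \ref{ttth2}. If $y=0$, the first equation gives $(\lambda\hat{A}-A)x=0$, and the second reduces to $(\frac{\lambda}{\beta}-1)Bx+(\lambda-1)C^{\top}z=0$. We will show that this forces $x=0$ and then $z=0$ (using full row rank of $C$), so in the main case $y\neq0$. If instead $x=0$, the first equation gives $B^{\top}y=0$, hence $y=0$ by the full row rank of $B$. So $x\neq0$ as well.

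To make the estimates transparent, we symmetrize with $\hat{A}^{1/2}$. Put $\tilde{x}=\hat{A}^{1/2}x$, $\tilde{A}=\hat{A}^{-1/2}A\hat{A}^{-1/2}$ and $\tilde{B}=B\hat{A}^{-1/2}$. The spectrum of $\tilde{A}$ equals that of $\hat{A}^{-1}A$, so it lies in $[\gamma^A_{\min},\gamma^A_{\max}]$. We multiply the first equation by $\tilde{x}^*$ and the second by $y^*$. In the second, the term $y^*C^{\top}z=(Cy)^*z$ vanishes, which is the key use of $Cy=0$. This leaves two scalar identities:
\[
(1-\lambda)\,\tilde{x}^*\tilde{B}^{\top}y=\lambda\|\tilde{x}\|^2-\tilde{x}^*\tilde{A}\tilde{x},\qquad (\tfrac{\lambda}{\beta}-1)\,y^*\tilde{B}\tilde{x}=\lambda(\alpha-\tfrac{1}{\beta})\|\tilde{B}^{\top}y\|^2 .
\]
We normalize $\|\tilde{x}\|=1$ and set $a=\tilde{x}^*\tilde{A}\tilde{x}\in[\gamma^A_{\min},\gamma^A_{\max}]$, $s=\|\tilde{B}^{\top}y\|^2>0$ and $w=\tilde{x}^*\tilde{B}^{\top}y$. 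Eliminating the cross term between the two identities should yield a quadratic in $\lambda$ with real coefficients, depending only on $a$, $s$, $\alpha$, $\beta$ and $|w|^2$ (or on the ratio of $|w|^2$ to $s$). The intended route is to take $y$ (or $\tilde{B}^{\top}y$) expressed through $\tilde{x}$ via the first vector equation before taking inner products. The condition $\alpha>\frac{1}{\beta}$ makes the coefficient $(\alpha-\frac1\beta)s$ positive.

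For real roots we bound the quadratic directly. Cauchy--Schwarz gives $|w|^2\le s$, which together with $a\le\gamma^A_{\max}$ should produce the upper bound $\frac{1}{\alpha}\bigl(1+\frac1\beta+(\alpha-\frac1\beta)\gamma^A_{\max}\bigr)$. The lower bound comes from positivity of the coefficients, giving $\lambda\ge\mu>0$, with $\mu$ taken as the smallest root over the admissible parameter range. For complex roots, both roots are conjugate and share the same modulus, so $|\lambda|^2$ equals the product of the roots. We will show that this product is at most $2\,\mathrm{Re}\,\lambda$, i.e. $(\mathrm{Re}\,\lambda-1)^2+(\mathrm{Im}\,\lambda)^2\le1$, by comparing the constant and linear coefficients. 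This is the main obstacle. For complex $\lambda$ the two cross terms are $w$ and $\bar w$, and the clean quadratic only appears after eliminating them correctly, most likely by solving the first identity for $w$, conjugating, and substituting into the second. Checking that the product-versus-sum inequality holds for all admissible $a$ and $s$ is where the hypothesis $\alpha>\frac1\beta$ (and possibly the range of $\gamma^A$) must be used carefully, so I would do this step first.
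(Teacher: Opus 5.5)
Your proposal has genuine gaps: one step is false outright, and the mechanisms you plan for both the real and the complex case do not work as described.

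\textbf{The reduction to $y\neq0$ fails.} For real $\lambda\neq1$, $y=0$ does \emph{not} force $x=0$.
\begin{itemize}
\item If $Ax=\beta\hat Ax$ for some $x\neq0$ (with $\beta\neq1$), then $(x;0;0)$ is an eigenvector of $\hat{\mathcal P}^{-1}\mathcal K$ with $\lambda=\beta$.
\item When $C$ is square, as in Example~\ref{EX1} where $l=m$, take any eigenpair $Ax=\gamma\hat Ax$ with $\gamma\neq1$. You can solve $(\gamma-1)C^{\top}z=-(\frac{\gamma}{\beta}-1)Bx$, so $(x;0;z)$ is an eigenvector. Hence all of $\sigma(\hat A^{-1}A)\setminus\{1\}$ occurs with $y=0$.
\end{itemize}
The paper keeps this case and records $\lambda\in[\gamma^A_{\min},\gamma^A_{\max}]$. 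Write $c(\gamma)=1+\frac1\beta+(\alpha-\frac1\beta)\gamma$ and $c_{\max}=c(\gamma^A_{\max})$. These eigenvalues lie below $c_{\max}/\alpha$ only if $\gamma^A_{\max}\le\beta+1$, because $\frac{c_{\max}}{\alpha}-\gamma^A_{\max}=\frac{\beta+1-\gamma^A_{\max}}{\alpha\beta}$. This is the condition $\gamma^A<\beta+1$ that the paper's proof invokes. Without such a condition the interval claim is false, for instance when $l=m$ and $\gamma^A_{\max}>\beta+1$.

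Likewise, $\lambda=1$ is always an eigenvalue, with eigenvector $(0;0;z)$. The inequality $1\le c_{\max}/\alpha$ is not automatic: it fails if $\gamma^A_{\max}<1$ and $\alpha\beta$ is large. So it cannot be ``checked from the bound''. For nonreal $\lambda$ your reduction is fine, since $y=0$ with $x\neq0$ would make $\lambda$ an eigenvalue of $\tilde A$.

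\textbf{The real case.} Your two identities give $w=\frac{\lambda-a}{1-\lambda}$ and $(\lambda-\beta)w=\lambda t$ with $t=(\alpha\beta-1)s$. Eliminating $w$ gives $(1+t)\lambda^2-(a+\beta+t)\lambda+a\beta=0$.
\begin{itemize}
\item This is a two-parameter family whose roots tend to $a$ and $\beta$ as $t\to0^+$. It cannot by itself give the stated bound: for $\hat A=A$, $\alpha=1$, $\beta=10$ the bound is $2$.
\item Your Cauchy--Schwarz step $w^2\le s$ is equivalent to $(\lambda-a)\bigl(\alpha\lambda^2-c(a)\lambda+1\bigr)\le0$. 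This yields only $0<\lambda\le\max\{\gamma^A_{\max},c_{\max}/\alpha\}$. That is a workable alternative route, but it again needs $\gamma^A_{\max}\le\beta+1$.
\end{itemize}
The paper instead gets a one-parameter quadratic. It scales the middle block by $\hat S^{-1/2}$ with $\hat S=B\hat A^{-1}B^{\top}$, so that $\bar B\bar B^{\top}=I$. It then eliminates $x=(\lambda-1)(\tilde A-\lambda I)^{-1}\bar B^{\top}y$. Finally it writes $g^*(\tilde A-\lambda I)^{-1}g/g^*g=1/(\gamma^A-\lambda)$ using a square root of $\pm(\tilde A-\lambda I)$; this needs definiteness, which holds for $\lambda>\gamma^A_{\max}$. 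The result is $\alpha\lambda^2-c(\gamma^A)\lambda+1=0$. Since the product of its roots is $1/\alpha$ and their sum is $c(\gamma^A)/\alpha$, both $\lambda>0$ and the upper bound follow.

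\textbf{The complex case.} For nonreal $\lambda$ there is no real-coefficient quadratic with roots $\lambda,\bar\lambda$. Eliminating $w$ gives $(\lambda-\beta)(\bar\lambda-a)=t\lambda(1-\bar\lambda)$, which mixes $\lambda$ and $\bar\lambda$, so a product-versus-sum comparison has nothing to act on. What works, and what the paper does, is to split into imaginary and real parts.
\begin{itemize}
\item The imaginary part forces $t=\beta-a$. In the paper this appears as a relation between $\|x\|^2$ and $\|y\|^2$.
\item The real part then becomes $(1+\beta-a)|\lambda|^2-2\beta\,\mathrm{Re}\,\lambda+a\beta=0$, equivalently $(\beta-|\lambda|^2)(\beta-a)=|\lambda-\beta|^2$, so $|\lambda|^2<\beta$.
\item Rearranging gives $|\lambda-1|^2=1-\frac{a(\beta-|\lambda|^2)+|\lambda|^2}{\beta}<1$.
\end{itemize}
This is exactly the paper's identity $|\lambda-1|^2=1-\frac{\gamma^A\eta\|x\|^2+|\lambda|^2\|y\|^2}{\beta\|y\|^2}$, written in your normalization.
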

			\begin{proof}
				Let ${\bf{v}}$ be an eigenvector with the eigenvalue $\lambda$ for the approximated preconditioned matrix,  i.e., $ \mathcal{K} {\bf{v}}=\lambda  \hat{\mathcal{P}}{\bf{v}}$. Define
				\[
				\mathcal{D}=\begin{pmatrix}
					\hat{A}& 0& 0 \\
					0 & \hat{S} &0\\
					0 & 0& I
				\end{pmatrix}, 
				\]
				where $\hat{S}=B\hat{A}^{-1} B^{\top}$.  Define ${\bf{v}}=\mathcal{D}^{-\frac{1}{2}}{\bf{\xi}}$. Therefore,
				the eigenvalue problem can be written as
				\begin{equation}\label{eigv}
					\mathcal{D}^{-\frac{1}{2}}\mathcal{K}\mathcal{D}^{-\frac{1}{2}}\xi=\lambda \mathcal{D}^{-\frac{1}{2}}\hat{\mathcal{P}}\mathcal{D}^{-\frac{1}{2}}\xi.
				\end{equation}
				Then it follows from Eq.  \eqref{eigv} that
				\begin{equation}\label{eq3}
					\begin{pmatrix}
						\tilde{A}& \bar{B} ^{\top} & 0 \\
						-\bar{B} & 0 & -\bar{C}^{\top}\\
						0 & \bar{C}& 0
					\end{pmatrix}
					\begin{pmatrix} x\\y\\z\end{pmatrix}=\lambda
					\begin{pmatrix}
						I & \bar{B}^{\top}&0\\
						-\frac{1}{\beta}\bar{B}&(\alpha-\frac{1}{\beta}) I& -\bar{C}^{\top}\\
						0&\bar{C}&0
					\end{pmatrix}
					\begin{pmatrix} x\\y\\z\end{pmatrix},
				\end{equation}
				where $\tilde{A}={\hat{A}}^{-\frac{1}{2}} A {\hat{A}}^{-\frac{1}{2}}$,
				$\bar{B}={\hat{S}}^{-\frac{1}{2}}B{\hat{A}}^{-\frac{1}{2}}$, and $\bar{C}=C{\hat{S}}^{-\frac{1}{2}}$.  It is known that $\bar{B}\bar{B}^{\top}=I$. From Eq. \eqref{eq3}, we obtain
				\begin{align}
					&\tilde{A}x-\lambda x=(\lambda-1)\bar{B}^{\top}y,\label{ee1}\\
					&( \frac{1}{\beta}\lambda-1)\bar{B}x+(\lambda-1)\bar{C}^{\top}z=\lambda (\alpha-\frac{1}{\beta})y,\label{ee2}\\
					&(1-\lambda)\bar{C}y=0.\label{ee3}
				\end{align}
				If $y = 0$, by Eq. \eqref{ee1}, we have
				$\tilde{A}x=\lambda x$ which means $\lambda\in [\gamma_{\min}^A,\gamma_{\max}^A]$. Hence, we suppose that  $\lambda\neq 1$ and $y \neq  0$. \\
				$(i)$ If $\lambda$ is a real eigenvalue, from Eq.   \eqref{ee1}, we get
				\begin{equation}\label{eqx1}
					x=(\lambda-1)(\tilde{A}-\lambda I)^{-1}\bar{B}^{\top}y.
				\end{equation}
				Substituting Eq. \eqref{eqx1} into Eq. \eqref{ee2} shows that
				\begin{equation*}
					( \frac{1}{\beta}\lambda-1)(\lambda-1)\bar{B}(\tilde{A}-\lambda I)^{-1}\bar{B}^{\top}y+(\lambda-1)\bar{C}^{\top}z-\lambda(\alpha-\frac{1}{\beta})y=0.
				\end{equation*} 
				By multiplying both sides from the left by $\frac{y^\ast}{y^\ast y}$ and using Eq. \eqref{ee3}, we have
				\begin{equation}\label{poly}
					( \frac{1}{\beta}\lambda-1)(\lambda-1)\frac{{(\bar{B}^{\top}y)}^\ast(\tilde{A}-\lambda I)^{-1}(\bar{B}^{\top}y)}{y^\ast y}-\lambda(\alpha-\frac{1}{\beta})=0.
				\end{equation} 
				Since $\tilde{A} - \lambda I$ is either symmetric positive definite or negative definite, there exists a symmetric positive definite matrix $J$ such that
				\[
				\tilde{A} - \lambda I = J^2 \quad \text{or} \quad \tilde{A} - \lambda I = -J^2.
				\]
				Define $g = \bar{B}^{\top} y$ and define $s = J^{-1}g$, we obtain
				\begin{equation*} 
					( \frac{1}{\beta}\lambda-1)(\lambda-1)\frac{g^\ast(\tilde{A}-\lambda I)^{-1}g}{g^\ast g}\cdot\frac{g^\ast g}{y^\ast y}-\lambda(\alpha-\frac{1}{\beta})=0.
				\end{equation*} 
				Hence,
				\begin{equation*}
					( \frac{1}{\beta}\lambda-1)(\lambda-1)\frac{g^\ast g}{g^\ast(\tilde{A}-\lambda I)g}-\lambda(\alpha-\frac{1}{\beta})=0,
				\end{equation*}
				we know that 
				\[
				\frac{g^\ast g}{g^\ast(\tilde{A}-\lambda I)g}=\frac{1}{\gamma^A-\lambda}.
				\]
				Therefore, Eq. \eqref{poly} can be written as
				\begin{equation*}
					q(\lambda)=\frac{\alpha \lambda^2-\left( 1+ \frac{1}{\beta}+(\alpha-\frac{1}{\beta})\gamma^A\right)\lambda+1}{\gamma^A-\lambda},
				\end{equation*}
				if we consider the following condition
				\begin{equation}\label{nume}
					\lambda>\frac{1}{\alpha}( 1+ \frac{1}{\beta}+(\alpha-\frac{1}{\beta})\gamma^A),
				\end{equation}
				then the numerator is positive. Now, we consider a condition where the denominator becomes negative, i.e.,  $ \gamma^A-\lambda<0$. 
				 Subtracting 
				$\gamma^A$
				from both sides of Eq. \eqref{nume} yields
				\[
				\lambda-\gamma^A>\frac{1}{\alpha}( 1+ \frac{1}{\beta}+(\alpha-\frac{1}{\beta})\gamma^{A})-\gamma^{A}=\frac{\beta +1-\gamma^{A}}{\alpha \beta}.
				\]
				Therefore, if $\gamma^{A} <\beta+1$, then
			$\lambda-\gamma^{A}>0,$ so 
			$\gamma^{A} -\lambda<0$, making the denominator of 
			$q(\lambda)$
			 negative for  
				\[\lambda>\frac{1}{\alpha}\left( 1+ \frac{1}{\beta}+(\alpha-\frac{1}{\beta})\gamma_{\max}^A\right).\]
				Now, if $\gamma^A>\frac{1}{2}$, it can be seen  that
				\[
				\lim_{\lambda\to {\gamma^A}^{+}}q(\lambda)=-\infty,\]
				and
				\[  \lim_{\lambda\to {\gamma^A}^{-}}q(\lambda)=+\infty.
				\]
				Therefore, there exists a $\mu\in (\gamma^A-\frac{1}{2},\gamma^A+\frac{1}{2})$ such that $q(\mu)=0$.\\
				
				\noindent$(ii)$ If $\lambda$ is a complex eigenvalue, multiplying Eq. \eqref{ee1} by $x^\ast$ on the left, the
				transposed conjugate of Eq. \eqref{ee2} by $y$ on the right, 
				and   Eq. \eqref{ee3} by $z^\ast$ on the left, we obtain
				\begin{align*}
					&x^\ast\tilde{A}x-\lambda{\|x\|}^2=(\lambda-1)x^\ast\bar{B}^{\top}y,\\
					&-x^\ast\bar{B}^{\top}y-z^\ast\bar{C}y=\bar{\lambda}(-\frac{1}{\beta}x^\ast\bar{B}^{\top}y+(\alpha-\frac{1}{\beta}){\|y\|}^2-z^\ast\bar{C}y),
					\\
					&(1-\lambda)z^\ast\bar{C}y=0.
				\end{align*}
				It follows from $z^\ast\bar{C}y=0$ that
				\begin{align}
					&x^\ast\tilde{A}x-\lambda{\|x\|}^2=(\lambda-1)x^\ast\bar{B}^{\top}y,\label{com1}\\
					&x^\ast\bar{B}^{\top}y=\frac{\bar{\lambda}}{\bar{\lambda}-\beta}(\alpha\beta-1){\|y\|}^2.\label{com2}
				\end{align}
				Substituting Eq. \eqref{com2} into Eq. \eqref{com1} yields that
				\begin{equation}\label{eqrc}
					x^\ast\tilde{A}x-\lambda{\|x\|}^2=\frac{{|\lambda|}^2-\bar{\lambda}}{\bar{\lambda}-\beta}(\alpha\beta-1){\|y\|}^2.
				\end{equation}
				Assume that $\lambda=a+ib$, then the real and imaginary parts of Eq. \eqref{eqrc} are obtained as 
				\begin{align}
					&x^\ast\tilde{A}x-a{\|x\|}^2= \frac{(a^2+b^2-a)(a-\beta)-b^2}{(a-\beta)^2+b^2} (\alpha\beta-1){\|y\|}^2,\label{com3}\\
					&-b\|x\|^2=\frac{(a^2+b^2-a)b+(a-\beta)b}{(a-\beta)^2+b^2} (\alpha\beta-1){\|y\|}^2.\label{com4}
				\end{align}
				Using ${|\lambda-1|}^2-1=a^2+b^2-2a$, and  substituting Eq. \eqref{com4} into Eq. \eqref{com3}  concludes that
				\begin{equation*} 
					x^\ast\tilde{A}x+ \frac{({|\lambda-1|}^2-1)\beta+a^2+b^2}{{(a-\beta)}^2+{b}^2}(\alpha\beta-1)\|y\|^2=0,
				\end{equation*}
				which implies that
				\[
				{|\lambda-1|}^2=1-\frac{{\gamma^{A}}\eta{\|x\|}^2+(a^2+b^2){\|y\|}^2}{\beta{\|y\|}^2},
				\]
				in which 
				$\eta=\frac{ {(a-\beta)}^2+b^2}{\alpha\beta-1}>0$, $	\gamma^{A}=\frac{x^\ast\tilde{A}x}{x^\ast x}$,  and then $\frac{{\gamma^{A}}\eta{\|x\|}^2+(a^2+b^2){\|y\|}^2}{\beta{\|y\|}^2}>0$.
			\end{proof}
		}
	
	Although some intermediate expressions in the proof depend on the eigenvector components, the final bounds have a clear global interpretation. The upper bound for real eigenvalues is independent of the eigenvector; the lower bound can be made global by taking the minimum over all possible 
$\gamma^A\in [\gamma_{\min}^A,\gamma_{\max}^A]$. For complex eigenvalues, the relation  	${|\lambda-1|} <1,$ holds uniformly, meaning that all complex eigenvalues lie strictly inside the unit disk centered at 1. This clustering property is the main theoretical justification for the fast convergence of GMRES when using the inexact preconditioner \eqref{25}. In practice, the parameters 
	$\alpha$
 and 
$\beta$ can be tuned to reduce the disk radius and improve performance, as illustrated in the numerical experiments.
	
		\begin{theorem}\label{th44}
			Suppose that $A$ is  SPD matrix, $B$ and $C$ are matrices with full row rank.
			Let  $\hat{A},\hat{S}$ and $\hat{Q}$  be the SPD approximations of ${A},{S}$ and ${Q}$, respectively. 
			If $\lambda$ is an eigenvalue of the preconditioned matrix $\hat{\mathcal{P}}^{-1}\mathcal{K}$, where $\hat{\mathcal{P}}$ is defined as in Eq. \eqref{27}, then $\lambda=a+ib$ satisfies the following cases:
			\[
			\begin{dcases}
				{|\lambda-1|}^2\leq 1-\frac{1}{\beta}\zeta_{\min}^A&\mathrm{if}\,\,\bar{C}y=0,\\
				|\lambda-1|^2\leq 1-\frac{1}{\beta}\psi_{\min}^A&\mathrm{if}\,\,\bar{C}y\neq 0,
			\end{dcases}
			\]
			where 
			\begin{align*}
				\zeta_{\min}^A&=\frac{{\gamma_{\min}^A}\eta{\|x\|}^2+(a^2+b^2){\|y\|}^2}{{\|y\|}^2},\\
				\psi_{\min}^A&=\frac{1}{\beta}\left(
				\frac{\gamma_{\min}^A{\|x\|}^2((a-\beta)^2+b^2)+(a^2+b^2)\nu_1{\|y\|}^2+(a^2+b^2-2a\beta)\nu_2{\|z\|}^2}{\nu_1{\|y\|}^2+\nu_2{\|z\|}^2}
				\right),
			\end{align*}	
			in which 
			\[
			\begin{array}{llll}
				\rho=\frac{y^\ast \bar{B}\bar{B}^{\top}y}{y^\ast y},&
				\rho_1=\frac{1}{\beta}\frac{y^\ast \bar{B}\bar{B}^{\top}y}{y^\ast y},&
				\rho_2=\frac{z^*\bar{C}\bar{C}^{\top}z}{z^*z},&
				\nu_1=\beta(\alpha-\rho_1),\\
				\nu_2=\frac{\beta}{\alpha}(\rho_2-1),&
				\eta=\frac{ {(a-\beta)}^2+b^2}{\alpha-\frac{1}{\beta}\rho},&
				\bar{B}={\hat{S}}^{\frac{-1}{2}}B{\hat{A}}^{\frac{-1}{2}},&
				\bar{C}={\hat{Q}}^{\frac{-1}{2}}C{\hat{S}}^{\frac{-1}{2}}.
			\end{array}
			\]
		\end{theorem}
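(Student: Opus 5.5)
I will mimic the proof of the preceding theorem for the Case I preconditioner \eqref{25}, now with a symmetric scaling adapted to all three approximations. Set
\[
\mathcal{D}=\begin{pmatrix}\hat A&0&0\\0&\hat S&0\\0&0&\hat Q\end{pmatrix},\qquad {\bf v}=\mathcal{D}^{-\frac12}\xi,
\]
and write $\tilde A=\hat A^{-\frac12}A\hat A^{-\frac12}$, $\bar B=\hat S^{-\frac12}B\hat A^{-\frac12}$, $\bar C=\hat Q^{-\frac12}C\hat S^{-\frac12}$. The generalized eigenproblem $\mathcal{K}{\bf v}=\lambda\hat{\mathcal P}{\bf v}$ then becomes a scaled system in $\xi=(x;y;z)$. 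In it the $(2,2)$ block of the scaled $\hat{\mathcal P}$ is $\alpha I-\frac1\beta\bar B\bar B^{\top}$, and the $(3,3)$ block is $\frac1\alpha(I-\bar C\bar C^{\top})$. Unlike before, $\bar B\bar B^{\top}\neq I$ and the $(3,3)$ block is nonzero, which is exactly why $\rho$, $\rho_1$, $\rho_2$, $\nu_1$, $\nu_2$ appear in the statement.

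Written blockwise, the system reads
\begin{align*}
&\tilde Ax-\lambda x=(\lambda-1)\bar B^{\top}y,\\
&(\tfrac\lambda\beta-1)\bar Bx+(\lambda-1)\bar C^{\top}z=\lambda(\alpha I-\tfrac1\beta\bar B\bar B^{\top})y,\\
&(1-\lambda)\bar Cy=\tfrac\lambda\alpha(I-\bar C\bar C^{\top})z.
\end{align*}
The main device is to take inner products, exactly as in part $(ii)$ of the previous proof:
\begin{itemize}
\item multiply the first equation by $x^*$;
\item take the conjugate transpose of the second and multiply by $y$ on the right;
\item multiply the third by $z^*$.
\end{itemize}
This gives scalar relations among $x^*\tilde Ax=\gamma^A\|x\|^2$, $x^*\bar B^{\top}y$, $z^*\bar Cy$, $\|y\|^2$ and $\|z\|^2$. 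In these relations $y^*\bar B\bar B^{\top}y=\rho\|y\|^2$ and $z^*\bar C\bar C^{\top}z=\rho_2\|z\|^2$.

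\emph{Case $\bar Cy=0$.} The third relation forces $\lambda(1-\rho_2)\|z\|^2=0$, so the $z$-terms drop out of the second relation, whichever way it is resolved. We are then exactly in the setting of the previous theorem, with $\alpha\beta-1$ replaced by $\beta(\alpha-\frac1\beta\rho)$. Solving for $x^*\bar B^{\top}y$ in terms of $\|y\|^2$, substituting into the first relation, and splitting into real and imaginary parts with $\lambda=a+ib$ gives
\[
|\lambda-1|^2=1-\tfrac1\beta\,\zeta^A ,\qquad \zeta^A=\frac{\gamma^A\eta\|x\|^2+(a^2+b^2)\|y\|^2}{\|y\|^2}.
\]
Bounding $\gamma^A\ge\gamma^A_{\min}$ then yields the first inequality.

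\emph{Case $\bar Cy\neq0$.} Here I will use the third relation to express $z^*\bar Cy$ through $\|z\|^2$, namely $(1-\lambda)z^*\bar Cy=\frac{\lambda}{\alpha}(1-\rho_2)\|z\|^2$. I substitute this, conjugated, into the second relation, so that $x^*\bar B^{\top}y$ becomes a combination of $\nu_1\|y\|^2$ and $\nu_2\|z\|^2$ with coefficients rational in $\lambda$. Feeding that into the first relation and again taking real and imaginary parts, I eliminate $b\|x\|^2$ via the imaginary-part equation, using $|\lambda-1|^2-1=a^2+b^2-2a$. The target is to reach
\[
|\lambda-1|^2=1-\tfrac1\beta\psi^A,
\]
with $\psi^A$ of the displayed form. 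Replacing $\gamma^A$ by $\gamma^A_{\min}$ then gives the second inequality.

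The main obstacle is this last algebraic elimination. The $z$-contribution carries the factor $\lambda/(1-\lambda)$ rather than $\bar\lambda/(\bar\lambda-\beta)$, so combining the two rational terms over a common denominator and isolating $|\lambda-1|^2$ is delicate. I would first clear denominators by multiplying by $|\bar\lambda-\beta|^2|1-\lambda|^2$, and then match terms against the claimed numerator
\[
\gamma_{\min}^A\|x\|^2\bigl((a-\beta)^2+b^2\bigr)+(a^2+b^2)\nu_1\|y\|^2+(a^2+b^2-2a\beta)\nu_2\|z\|^2 .
\]
A further point needs care: the monotone replacement of $\gamma^A$ by $\gamma^A_{\min}$ requires its coefficient to be nonnegative. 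This holds since $(a-\beta)^2+b^2\ge0$, provided the denominators $\nu_1\|y\|^2+\nu_2\|z\|^2$ (respectively $\alpha-\frac1\beta\rho$) are positive. I would state that positivity as an implicit assumption, in the spirit of $\alpha>1/\beta$ in the preceding theorem.
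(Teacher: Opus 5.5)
Your route is the paper's own: the same scaling by $\mathrm{diag}(\hat A,\hat S,\hat Q)$, the same three inner products, the same split on $\bar Cy$, and the same elimination of $\|x\|^2$ through the imaginary part. The gap is the step you defer in the case $\bar Cy\neq0$. You plan to ``match terms against the claimed numerator'', but the elimination does not produce that numerator.

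The factor $\lambda/(1-\lambda)$ you worry about never actually appears. Once $x^*\bar B^{\top}y$ is inserted into the first relation, $z^*\bar Cy$ comes multiplied by $\lambda-1$, and $(\lambda-1)z^*\bar Cy=\frac{\lambda}{\alpha}(\rho_2-1)\|z\|^2$ can be substituted directly. One gets
\[
x^*\tilde Ax-\lambda\|x\|^2-\frac{|\lambda|^2-\bar\lambda}{\bar\lambda-\beta}\,\nu_1\|y\|^2+\frac{|\lambda|^2-\lambda}{\bar\lambda-\beta}\,\nu_2\|z\|^2=0 .
\]
Subtracting $a/b$ times the imaginary part from the real part, and multiplying by $D=(a-\beta)^2+b^2$, gives
\[
\gamma^A D\|x\|^2+\bigl[\beta(|\lambda-1|^2-1)+|\lambda|^2\bigr]\nu_1\|y\|^2+(1-\beta)|\lambda|^2\,\nu_2\|z\|^2=0 .
\]
The $\nu_2$-coefficient contains no $|\lambda-1|^2$. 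The correct conclusion is therefore $|\lambda-1|^2=1-\frac1\beta\bigl(\gamma^AD\|x\|^2+|\lambda|^2\nu_1\|y\|^2+(1-\beta)|\lambda|^2\nu_2\|z\|^2\bigr)/(\nu_1\|y\|^2)$, with $\nu_1\|y\|^2$ alone in the denominator. The displayed $\psi^A$, with denominator $\nu_1\|y\|^2+\nu_2\|z\|^2$ and coefficient $a^2+b^2-2a\beta$, does not follow. The paper reaches it only through an arithmetic slip in \eqref{compp}, where the $\nu_2$-coefficient is written as $(|\lambda-1|^2-1)\beta+a^2+b^2+2a\beta$. Its final $\psi$ then even flips the sign of $2a\beta$.

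This discrepancy is substantive. Take $1\times1$ blocks $A=\tfrac12$, $B=1$, $C=\sqrt{0.1}$, $\hat A=\hat S=\hat Q=1$, $\alpha=2$, $\beta=1$. Then $\det(\mathcal K-\lambda\hat{\mathcal P})=-(\lambda^3-1.375\lambda^2+0.65\lambda-0.05)$ has the root $\lambda\approx0.6402+0.3451i$. Its eigenvector satisfies $\bar Cy\neq0$ and $\nu_1\|y\|^2+\nu_2\|z\|^2\approx0.896>0$. Yet $|\lambda-1|^2\approx0.2485$, while $1-\frac1\beta\psi^A_{\min}\approx0.0733$. So the second inequality is false as stated, even under your positivity proviso; what your computation can prove is the corrected identity above.

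Two smaller points. First, for $\bar Cy=0$, your own substitution $\alpha\beta-1\mapsto\beta(\alpha-\frac1\beta\rho)$ yields $\eta=D/(\alpha\beta-\rho)$. This differs from the stated $\eta$ by a factor $\beta$; the paper loses this $\beta$ in \eqref{eqro}. Second, because you divide by $b$, real eigenvalues are not covered, including all those with $y=0$, although the statement is asserted for every eigenvalue.
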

		\begin{proof}
			Assume that $\mathcal{D}$ is defined as
			\[ \mathcal{D}=
			\begin{pmatrix}
				\hat{A}& 0& 0 \\
				0 &\hat{S} &0\\
				0 & 0&\hat{Q}
			\end{pmatrix}.
			\]
			Similar to Eq. \eqref{eq3}, the eigenvalue problem 
			$
			\mathcal{D}^{-\frac{1}{2}}\mathcal{K}\mathcal{D}^{-\frac{1}{2}}\xi=\lambda \mathcal{D}^{-\frac{1}{2}}\hat{\mathcal{P}}\mathcal{D}^{-\frac{1}{2}}\xi,
			$
			can be written as
			\begin{equation}\label{eq33}
				\begin{pmatrix}
					\tilde{A}& \bar{B} ^{\top} & 0 \\
					-\bar{B} & 0 & -\bar{C}^{\top}\\
					0 & \bar{C}& 0
				\end{pmatrix}
				\begin{pmatrix} x\\y\\z\end{pmatrix}=\lambda
				\begin{pmatrix}
					I & \bar{B}^{\top}&0\\
					-\frac{1}{\beta}\bar{B}&\alpha I-\frac{1}{\beta}\bar{B}\bar{B}^{\top}& -\bar{C}^{\top}\\
					0&\bar{C}&-\frac{1}{\alpha}(\bar{C}\bar{C}^{\top}-I)
				\end{pmatrix}
				\begin{pmatrix} x\\y\\z\end{pmatrix},
			\end{equation}
			where 
			$\bar{B}={\hat{S}}^{\frac{-1}{2}}B{\hat{A}}^{\frac{-1}{2}}$, and $\bar{C}={\hat{Q}}^{\frac{-1}{2}}C{\hat{S}}^{\frac{-1}{2}}$. 
			It follows from Eq. \eqref{eq33} that
			\begin{align}
				&\tilde{A}x-\lambda x=(\lambda-1)\bar{B}^{\top}y,\label{ee133}\\
				&( \frac{1}{\beta}\lambda-1)\bar{B}x+(\lambda-1)\bar{C}^{\top}z=\lambda (\alpha I-\frac{1}{\beta}\bar{B}\bar{B}^{\top})y,\label{ee233}\\
				&(\lambda-1)\bar{C}y= \frac{1}{\alpha}\lambda(\bar{C}\bar{C}^{\top}-I)z.\label{ee333}
			\end{align}
			If $y = 0$, from Eq. \eqref{ee133}, we deduce that
			$\tilde{A}x=\lambda x$ which means $\lambda\in [\lambda_{\min}(\hat{A}^{-1}A),\lambda_{\max}(\hat{A}^{-1}A)]$. Therefore, we suppose that  $\lambda\neq 1$ and $y \neq  0$. \\
			$(i)$ Let $\bar{C}y=0$. Eq. \eqref{ee333} implies that $z=0$ if $\bar{C}\bar{C}^{\top}-I$ is nonsingular.
			Multiplying Eq. \eqref{ee133} by $x^\ast$ on the left and
			transposed conjugate of Eq. \eqref{ee233} by $y$ on the right, we obtain
			\begin{align}
				&x^\ast\tilde{A}x-\lambda \|x\|^2=(\lambda-1)x^\ast\bar{B}^{\top}y,\label{ee_133}\\
				&( \frac{1}{\beta}\bar{\lambda}-1)x^\ast\bar{B}^{\top}y=\bar{\lambda} (\alpha-\frac{1}{\beta}\rho)\|y\|^2 ,\label{ee_233}
			\end{align}
			where $\rho=\frac{y^\ast \bar{B}\bar{B}^{\top}y}{y^\ast y}$.
			Substituting Eq. \eqref{ee_233} into Eq. \eqref{ee_133} shows that
			\begin{equation}\label{com51}
				x^\ast\tilde{A}x-\lambda \|x\|^2-(\lambda-1)\frac{\bar{\lambda}}{\bar{\lambda}-\beta}\beta (\alpha-\frac{1}{\beta}\rho)\|y\|^2=0.
			\end{equation}
			Assume that $\lambda=a+ib$,  the real and imaginary parts of Eq. \eqref{com51} are obtained as follows
			\begin{align}
				&x^\ast\tilde{A}x-a{\|x\|}^2-  \frac{(a^2+b^2-a)(a-\beta)-b^2}{{(a-\beta)}^2+b^2}  \beta(\alpha-\frac{1}{\beta}\rho){\|y\|}^2=0,\label{com31}\\
				&-b{\|x\|}^2-\frac{(a^2+b^2-a)b+(a-\beta)b}{{(a-\beta)}^2+b^2} \beta(\alpha-\frac{1}{\beta}\rho){\|y\|}^2=0.\label{com41}
			\end{align}
			We know that ${|\lambda-1|}^2-1=a^2+b^2-2a$. 
			Substituting Eq. \eqref{com41} into Eq. \eqref{com31}, concludes that
			\begin{equation} \label{eqro}
				x^\ast\tilde{A}x+ \frac{({|\lambda-1|}^2-1)\beta+a^2+b^2}{{(a-\beta)}^2+b^2}(\alpha-\frac{1}{\beta}\rho)
				{\|y\|}^2=0, 
			\end{equation}
			which implies that ${|\lambda-1|}^2=1-\frac{1}{\beta}\zeta$, where
			\[
			\zeta=\frac{\gamma^A\eta{\|x\|}^2+(a^2+b^2){\|y\|}^2}{{\|y\|}^2},\quad\gamma^A=\frac{x^\ast\tilde{A}x}{x^\ast x}, \quad\eta=\frac{ {(a-\beta)}^2+b^2}{\alpha-\frac{1}{\beta}\rho}.
			\]
			Define
			\[
			\zeta_{\min}^A=\frac{{\gamma_{\min}^A}\eta{\|x\|}^2+(a^2+b^2){\|y\|}^2}{{\|y\|}^2}.
			\]
			If $\frac{1}{\beta}\zeta_{\min}\leq 1$, we have 
			$
			{|\lambda-1|}^2\leq 1-\frac{1}{\beta}\zeta_{\min}^A.
			$
			It follows that
			\[
			1-\sqrt{1-\frac{1}{\beta}\zeta_{\min}^A}\leq \mathscr{R}(\lambda)\leq  1+\sqrt{1-\frac{1}{\beta}\zeta_{\min}^A}.
			\]
			If $\frac{1}{\beta}\zeta_{\min}^A> 1$, then
			there is no $\lambda$ with non-zero imaginary part that satisfies Eq. \eqref{eqro}.

			\noindent$(ii)$ Let $\bar{C}y\neq 0$. Multiplying Eq. \eqref{ee133} by $x^\ast$ on the left,
			transposed conjugate of Eq. \eqref{ee233} by $y$ on the right,  and 
			Eq. \eqref{ee233} by $z^*$ on the left, yields that 
			\begin{align}
				&x^\ast\tilde{A}x-\lambda {\|x\|}^2=(\lambda-1)x^\ast\bar{B}^{\top}y,\label{e1_133}\\
				&( \frac{1}{\beta}\bar{\lambda}-1)x^\ast\bar{B}^{\top}y+ (\bar{\lambda}-1)z^*\bar{C}y=\bar{\lambda} (\alpha-\rho_1){\|y\|}^2 ,\label{e1_233}\\
				&(\lambda-1)z^*\bar{C}y= \frac{1}{\alpha}\lambda(\rho_2-1){\|z\|}^2 \label{e1_333}
			\end{align}
			where $\rho_1=\frac{1}{\beta}\frac{y^\ast \bar{B}\bar{B}^{\top}y}{y^\ast y}$ and $\rho_2=\frac{z^*\bar{C}\bar{C}^{\top}z}{z^*z}$.
			Substituting Eqs. \eqref{e1_233} and \eqref{e1_333}  into Eq. \eqref{e1_133} yields that
			\begin{equation}\label{compl}
				x^\ast\tilde{A}x-\lambda {\|x\|}^2-
				\frac{{|\lambda|}^2-\bar{\lambda}}{\bar{\lambda}-\beta}
				\beta(\alpha-\rho_1){\|y\|}^2+
				\frac{{|\lambda|}^2-{\lambda} }{\bar{\lambda}-\beta}\frac{\beta}{\alpha}(\rho_2-1){\|z\|}^2=0.
			\end{equation}
			Let $\lambda=a+ib,$ and $b\neq 0$. 
			Define $\nu_1=\beta(\alpha-\rho_1)$ and $\nu_2=\frac{\beta}{\alpha}(\rho_2-1)$,
			then we can write the real and imaginary parts of Eq. \eqref{compl} as
			\begin{align}
				&x^\ast\tilde{A}x-a {\|x\|}^2-\frac{(a^2+b^2-a)(a-\beta)-b^2}{{(a-\beta)}^2+b^2}\nu_1{\|y\|}^2+
				\frac{(a^2+b^2-a)(a-\beta)+b^2}{{(a-\beta)}^2+b^2}\nu_2{\|z\|}^2=0,\label{cy1}
				\\
				&-{\|x\|}^2-\frac{a^2+b^2-\beta}{{(a-\beta)}^2+b^2}\nu_1{\|y\|}^2+
				\frac{a^2+b^2-2a+\beta}{(a-\beta)^2+b^2}\nu_2{\|z\|}^2=0. \label{cy2}
			\end{align}
			Substituting Eq. \eqref{cy2} into Eq. \eqref{cy1}, using ${|\lambda-1|}^2-1=a^2+b^2-2a$, yields that
			\begin{equation}\label{compp}
				x^\ast\tilde{A}x+\frac{({|\lambda-1|}^2-1)\beta+a^2+b^2}{{(a-\beta)}^2+b^2}\nu_1{\|y\|}^2
				+\frac{({|\lambda-1|}^2-1)\beta+a^2+b^2+2a\beta}{{(a-\beta)}^2+b^2}\nu_2{\|z\|}^2=0.
			\end{equation}
			A straightforward calculation yields
			$
			{|\lambda-1|}^2=1-\frac{1}{\beta}\psi,
			$
			where
			\[\psi=
			\frac{x^\ast\tilde{A}x((a-\beta)^2+b^2)+(a^2+b^2)\nu_1{\|y\|}^2+(a^2+b^2-2a\beta)\nu_2{\|z\|}^2}{\nu_1{\|y\|}^2+\nu_2{\|z\|}^2}
			>0.
			\]
			Define
			\[
			\psi_{\min}^A=\frac{1}{\beta}\left(
			\frac{\gamma_{\min}^A{\|x\|}^2((a-\beta)^2+b^2)+(a^2+b^2)\nu_1{\|y\|}^2+(a^2+b^2-2a\beta)\nu_2{\|z\|}^2}{\nu_1{\|y\|}^2+\nu_2{\|z\|}^2}
			\right).
			\]
			If $
			\frac{1}{\beta}\psi_{\min}^A\leq 1$, 
			we have 
			$
			{|\lambda-1|}^2\leq 1-\frac{1}{\beta}\psi_{\min}^A.
			$
			It follows that
			\[
			1-\sqrt{1-\frac{1}{\beta}\psi_{\min}^A}\leq \mathscr{R}(\lambda)\leq  1+\sqrt{1-\frac{1}{\beta}\psi_{\min}^A}.
			\]
			If $\frac{1}{\beta}\psi_{\min}^A> 1$, then there exists no $\lambda$ with non-zero imaginary part that satisfies Eq. \eqref{compp}.
		\end{proof}
	}
	
	The bounds in Theorem \ref{th44} are more intricate because they involve three approximations $(\hat{A}, \hat{S}, \hat{Q})$, but they still yield global clustering results. The quantities 
$\zeta_{\min}^A$ and $\psi_{\min}^A$​, though defined using norms of the eigenvector, can be bounded below by constants that depend only on the extreme eigenvalues of $\hat{A}^{-1} A, \rho_1,$ and $\rho_2$​(which are themselves bounded in terms of the approximation qualities). Consequently, in both cases ($\bar{C} y=0$ and $\bar{C}\neq0)$, we obtain a uniform estimate of the form 
	$|\lambda-1|^2\leq 1-\delta$
	for some 
	$\delta>0$, provided the approximations are sufficiently accurate. This guarantees that the eigenvalues are clustered around 1, ensuring robust convergence of the preconditioned FGMRES method.
	
	We note here that we use  $\hat{\mathcal{P}}$  as an inexact preconditioner to compute a vector of the form  ${\bf{v}} = \hat{\mathcal{P}}^{-1}{\bf{w}} $  in each iteration of the Krylov subspace methods, such as GMRES method. To accomplish this, we must solve the following linear system:
	\begin{equation*}
		\begin{pmatrix}
			\hat{A} & B^{\top}&0\\
			-\frac{1}{\beta}B&\alpha \hat{S}-\frac{1}{\beta}B\hat{A}^{-1} B^{\top}& -C^{\top} \\
			{0} &C &\frac{1}{\alpha}\hat{Q}-\frac{1}{\alpha}C \hat{S}^{-1}C^{\top}
		\end{pmatrix}
		\begin{pmatrix} v_1\\v_2\\v_3	\end{pmatrix}=\begin{pmatrix} w_1\\w_2\\w_3	\end{pmatrix}.
	\end{equation*}
	Therefore, we can consider the following algorithm.
	\begin{algorithm}[H]
		\caption{Computation of ${\bf{v}} = \hat{\mathcal{P}}^{-1}{\bf{w}} $.}
		\begin{algorithmic}[1]
			\STATE Solve $\hat{A}u_1=w_1$;
			\STATE Solve $\hat{S}u_2=w_2$;
			\STATE Solve $\hat{Q}v_3=-Cu_2-\frac{1}{\beta}C\hat{S}^{-1}Bu_1+\alpha w_3$;\label{state3}
			\STATE Solve $\hat{S}v_2=\frac{1}{\alpha}(\frac{1}{\beta}Bu_1+w_2+C^{\top}v_3)$;
			\STATE Solve $\hat{A}v_1=w_1-B^{\top}v_2$.
		\end{algorithmic}
		\label{Alg1}
	\end{algorithm}
	
	\section{Numerical experiments}\label{Sec4}
	This section is dedicated to evaluating the effectiveness of the proposed inexact parameterized preconditioner \eqref{27} and comparing its performance with the following preconditioners:
	\begin{equation*}
		\mathcal{\bar{P}}=\left(\begin{array}{ccc}
			\hat{A}& B^{\top}&0\\
			B&\tilde{\alpha} \hat{S}+B\hat{A}^{-1} B^{\top}& 0 \\
			0&0& \tilde{\beta} \hat{Q}
		\end{array}\right),
	\end{equation*}
	where $\tilde{\alpha}=m+n+l$ and $\tilde{\beta}=1/(\tilde{\alpha}+1)$, and 
	\begin{equation}\label{preQ}
		\mathcal{Q}=\left(\begin{array}{ccc}
			{\hat{A}} & {B^{\top}} & {0} \\
			{B} &B\hat{A}^{-1} B^{\top}& -C^{\top}\\
			0& C & \alpha I
		\end{array}\right).
	\end{equation}
	The above  preconditioners are provided in \cite{Wang} and \cite{Balani}, respectively. In \eqref{preQ},
	we only work with an approximate $\hat{A}$, similar to Case I in Section \ref{Sec_3}.
	
	In all test problems,
	$\hat{A}$, $\hat{S}$, and $\hat{Q}$  are the SPD approximations of $A,$ $B {A}^{-1} B^{\top},$ and $C {S}^{-1} C^{\top}$, which are chosen as
	\begin{itemize}
		\item $\hat{A}=\mathrm{diag}(A)$,
		\item $\hat{S}=\mathrm{diag}(B \hat{A}^{-1}B^{\top})$,
		\item $\hat{Q}=\mathrm{tridiag}(C \hat{S}^{-1} C^{\top})$ and ${M}$ is its incomplete Cholesky factor with a dropping tolerance of $10^{-3}$.
	\end{itemize}
	All initial guesses are set to zero. In our implementations, all preconditioners are used as right preconditioners to accelerate the convergence of GMRES and FGMRES methods.  
	For all the preconditioners considered in this paper, when the GMRES method is used, all inner linear systems are solved directly using the \textsc{Matlab} backslash operator. In the inexact implementation, however, the linear systems associated with the coefficient matrices $\hat{A}$ and $\hat{S}$ are still solved directly  by the backslash operator, whereas the inner linear system with coefficient matrix $\hat{Q}$ is solved approximately by the preconditioned conjugate gradient (PCG) method with the preconditioner $MM^{\top}$, using a tolerance of $\mathrm{tol}_{\mathrm{PCG}}=10^{-2}$
	and a maximum of 100 iterations.
	

	In the tables, a hyphen (-) indicates that the solution has not been computed within 500 seconds.
	The experiments were performed on a Laptop with an Intel (R), Core (TM) i5-7200U, CPU @ 2.50 GHz and 8 GB of memory, using \textsc{Matlab-R2021}a. The methods being tested were compared based on the number of iteration steps (referred to as ``IT") and the elapsed CPU time in seconds (referred to as ``CPU"). The iteration process was stopped as soon as the current relative residual 2-norm was less than $10^{-7}$, equivalently,
	\[\mathrm{RES}=\frac{{\left\|{\mathbf{b}}-\mathcal{K} {\bf{x}}^{(k)}\right\|}}{{\left\|{\mathbf{b}}\right\|}} \leq 10^{-7},\] 
	or if the maximum number of iterations exceeded 1000. Additionally, the	accuracy of the methods was compared using
	$
	\mathrm{ERR}={{\|{\bf{x} }^{(k)}-{\bf{x} }^\ast  \|}}/{{\|{\bf{x} }^\ast\|} },
	$
	where ${\bf{x} }^{(k)}$ and ${\bf{x} }^{\ast}$ represent the current iteration and the exact solution of \eqref{EQ1}, respectively.
	 Additionally, the right-hand side vector
	${\bf{b} }$ is set as ${\bf{b}}= \mathcal{K}{\bf{e}}$, where ${\bf{e} }\in \mathbb{R}^{n+m+l}$ is a vector of all ones. 
	
		In all tests, for preconditioner \eqref{27}, $\beta$ is assumed to be  $1$, while $\alpha$ is treated as a variable parameter. Based on the theoretical results presented in Theorem \ref{ttth2} and Proposition \ref{ppp1}, the choice $\beta=1$ is theoretically well motivated, since it leads to particularly favourable spectral properties of the preconditioned matrix and may result in rapid convergence of Krylov subspace methods. With $\beta$ fixed at this value, $\alpha$ becomes the primary tuning parameter, and its influence on the convergence behaviour is investigated through the numerical experiments. We emphasize, however, that these parameter choices are not optimal, and determining optimal or near-optimal values of $\alpha$ and $\beta$ remains an important topic for further investigation.
	
	\begin{example}[\cite{Huang1,Xie}]\label{EX1}\rm
		Consider the saddle point problem \eqref{EQ1} for which
		\[
		A=\begin{pmatrix}
			I \otimes T +T\otimes I&0\\
			0&I \otimes T +T\otimes I
		\end{pmatrix}\in \mathbb{R}^{2p^2\times 2p^2},
		\]
		$B=
		(I \otimes F \,\,\, F\otimes I)
		\in \mathbb{R}^{p^2\times 2p^2},
		$ and $C=E\otimes F \in  \mathbb{R}^{p^2\times p^2}$, where
		\[
		T=\frac{1}{h^2}\cdot \mathrm{tridiag}(-1,2,-1)\in \mathbb{R}^{p\times p},\,\quad F=\frac{1}{h}\cdot \mathrm{tridiag}(0,1,-1)\in \mathbb{R}^{p\times p}
		\]
		and $E=\mathrm{diag}(1,p+1,2p+1,\dots,p^2-p+1)$, where $\otimes$ denotes the Kronecker product and
		$h=\frac{1}{p+1}$ represents the discretization mesh size.
	\end{example}	
	In Tables \ref{tabl1} and \ref{tabl2}, we present numerical results obtained from the right preconditioned GMRES and FGMRES methods using different preconditioners.  
	The numerical results clearly demonstrate that the GMRES (FGMRES) method with the preconditioner  $\hat{\mathcal{P}}$  achieves higher accuracy compared to the preconditioners $\bar{\mathcal{P}}$ and $\mathcal{Q}$.  
	For the case $\mathcal{Q}(\alpha=0.5)$ with problem size 262411,  GMRES method stagnated, i.e., two consecutive iterates became identical before the prescribed tolerance was achieved. Consequently, the final relative residual remained at $1.37\times10^{-7}$, which is slightly greater than  the stopping criterion $10^{-7}$.

	Furthermore, Figure  \ref{fi1}  displays the eigenvalue distribution of matrices, $\mathcal{K}$ and $\mathcal{\hat{P}}^{-1}\mathcal{K}$ for $p=32$.
	As can be seen from the figure, the eigenvalues of  $\mathcal{\hat{P}}^{-1}\mathcal{K}$ are clustered around  1 exhibit a higher degree of clustering compared to those of $\mathcal{K}$. This clustering suggests that the corresponding GMRES (FGMRES) method should demonstrate a faster convergence rate.

	In addition to the preconditioners reported in the numerical tables, we experimented with the shift-splitting preconditioner proposed in \cite{Cao}. Although satisfactory performance was observed for some small-scale test problems, the method became significantly less effective as the problem dimension increased, either failing to converge within the prescribed iteration limit or requiring considerably longer computational times. Since these results were not competitive with those presented in the tables and would not provide additional insight into the large-scale behaviour of the methods, they have been omitted for brevity.

	As clearly observed in Figure \ref{EX1FIG}, increasing the parameter $\alpha$ across the broad range of 0.5 to 10 (with a step size of 0.5) results in an almost entirely flat and stable behaviour for both the iteration counts (IT) and the CPU time. The number of iterations required for convergence remains practically constant throughout this entire interval, demonstrating that the algorithm achieves fast convergence almost independently of the specific $\alpha$ value chosen. Likewise, the CPU time exhibits a uniform trend without any significant increase, implying that the computational cost is consistent and does not degrade as $\alpha$ varies. This notably stable response serves as strong evidence of the high robustness of the proposed preconditioner against variations in $\alpha$, confirming that its overall performance is not critically sensitive to the exact tuning of this parameter.
	
	\begin{table}[!htp]
		\centering
		\caption{Numerical results of the preconditioned GMRES method  for Example \ref{EX1}.}
		\begin{tabular}{llllll}
		\toprule
			&Size&4096&16384&65536&262144\\
			&$p$&32&64&128&256\\
			\midrule $\hat{\mathcal{P}}(\alpha=1.5)$
			&IT&53&100&189&353\\
			&CPU&0.11&0.47&4.33&121.46\\
			&RES&$7.45\cdot10^{-08}$&$9.74\cdot10^{-08}$&$9.65\cdot10^{-08}$&$9.66\cdot10^{-08}$\\
			&ERR&$1.47\cdot10^{-06}$&$1.22\cdot10^{-06}$&$1.14\cdot10^{-05}$&$5.55\cdot10^{-05}$\\
			$\hat{\mathcal{P}}(\alpha=5)$
			&IT&53&100&189&353\\
			&CPU&0.08&0.52&4.30&124.06\\
			&RES&$7.52\cdot10^{-08}$&$9.74\cdot10^{-08}$&$9.67\cdot10^{-08}$&$9.46\cdot10^{-08}$\\
			&ERR&$1.14\cdot10^{-06}$&$1.22\cdot10^{-06}$&$9.18\cdot10^{-06}$&$5.53\cdot10^{-05}$\\
			$\hat{\mathcal{P}}(\alpha=10)$
			&IT&53&100&189&353\\
			&CPU&0.08&0.51&4.40&124.70\\
			&RES&$7.53\cdot10^{-08}$&$9.74\cdot10^{-08}$&$9.67\cdot10^{-08}$&$9.45\cdot10^{-08}$\\
			&ERR&$6.75\cdot10^{-07}$&$1.22\cdot10^{-06}$&$7.92\cdot10^{-06}$&$5.53\cdot10^{-05}$\\
			\midrule ${\mathcal{Q}}(\alpha=0.5)$
			&IT&56&101&190&353\\
			&CPU&0.17&1.09&10.23&195.23\\
			&RES&$9.04\cdot10^{-08}$&$9.80\cdot10^{-08}$&$9.62\cdot10^{-08}$&$1.37\cdot10^{-07}$\\
			&ERR&$3.60\cdot10^{-06}$&$2.82\cdot10^{-06}$&$1.31\cdot10^{-05}$&$6.09\cdot10^{-05}$\\
			${\mathcal{Q}}(\alpha=10)$
			&IT&81&104&193&357\\
			&CPU&0.26&1.10&10.47&196.55\\
			&RES&$9.69\cdot10^{-08}$&$9.58\cdot10^{-08}$&$8.81\cdot10^{-08}$&$9.32\cdot10^{-08}$\\
			&ERR&$6.31\cdot10^{-06}$&$1.42\cdot10^{-05}$&$1.61\cdot10^{-06}$&$5.51\cdot10^{-05}$\\ \hline
			$\bar{\mathcal{P}}$
			&IT&85&131&235&429\\
			&CPU&0.15&0.68&6.18&186.11\\
			&RES&$9.04\cdot10^{-08}$&$9.59\cdot10^{-08}$&$9.65\cdot10^{-08}$&$9.75\cdot10^{-08}$\\
			&ERR&$1.94\cdot10^{-06}$&$5.22\cdot10^{-06}$&$8.01\cdot10^{-06}$&$5.77\cdot10^{-05}$\\
			
			\bottomrule
		\end{tabular}
		\label{tabl1}
	\end{table}	

	\begin{figure}[H]
		\centering
		\includegraphics[width=.8\linewidth]{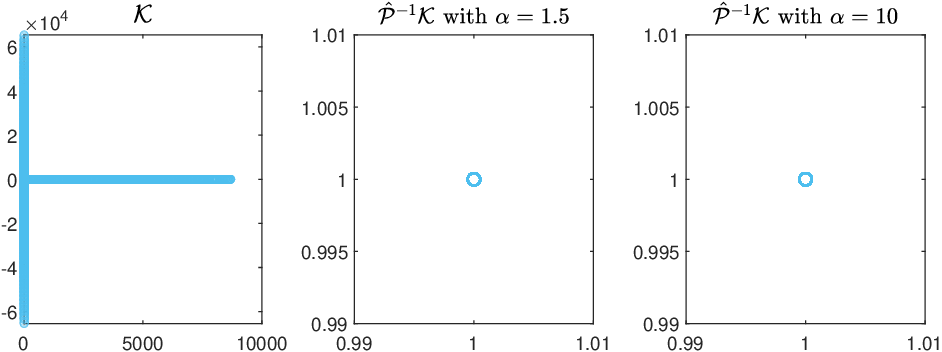}
		\caption{Eigenvalue distribution with $p=32$ for Example \ref{EX1}.}
		\label{fi1}
	\end{figure}
\setlength{\heavyrulewidth}{0.15em} 
\setlength{\lightrulewidth}{0.08em} 
\setlength{\cmidrulewidth}{0.08em}
	\begin{table}[!htp]
		\centering
		\caption{Numerical results of the FGMRES method  for Example \ref{EX1}.		\label{tabl2}}
		\resizebox{\textwidth}{!}{
			\begin{tabular}{llllllll}
				\toprule
				&Size&16384&65536&262144&1048576&4194304&16777216\\
				&$p$&64&128&256&512&1024&2048\\
				\midrule
				$\hat{\mathcal{P}}(\alpha=1.5)$
				&IT&2&2&2&2&2&2\\
				&CPU&0.06&0.23&1.14&5.45&23.90&201.31\\
				&RES&$4.91\cdot10^{-14}$&$1.36\cdot10^{-13}$&$3.81\cdot10^{-13}$&$1.21\cdot10^{-12}$&$3.30\cdot10^{-12}$&$1.04\cdot10^{-11}$\\
				&ERR&$4.13\cdot10^{-13}$&$9.29\cdot10^{-13}$&$1.80\cdot10^{-11}$&$1.61\cdot10^{-08}$&$6.71\cdot10^{-09}$&$1.58\cdot10^{-09}$\\
				$\hat{\mathcal{P}}(\alpha=5)$
				&IT&2&2&2&2&2&2\\
				&CPU&0.08&0.26&1.17&5.38&24.14&202.01\\
				&RES&$4.77\cdot10^{-14}$&$1.49\cdot10^{-13}$&$4.31\cdot10^{-13}$&$1.22\cdot10^{-12}$&$3.16\cdot10^{-12}$&$8.89\cdot10^{-12}$\\
				&ERR&$3.20\cdot10^{-13}$&$7.09\cdot10^{-13}$&$1.80\cdot10^{-11}$&$2.43\cdot10^{-09}$&$6.66\cdot10^{-08}$&$1.38\cdot10^{-08}$\\
				$\hat{\mathcal{P}}(\alpha=10)$
				&IT& 2&2&2&2&2&2\\
				&CPU&0.04&0.24&1.13&5.42&24.17&210.86\\
				&RES&$4.89\cdot10^{-14}$&$1.42\cdot10^{-13}$&$3.82\cdot10^{-13}$&$1.23\cdot10^{-12}$&$3.44\cdot10^{-12}$&$8.64\cdot10^{-12}$\\
				&ERR&$3.86\cdot10^{-13}$&$7.09\cdot10^{-13}$&$1.76\cdot10^{-11}$&$6.35\cdot10^{-10}$&$1.70\cdot10^{-07}$&$6.01\cdot10^{-08}$\\
				\midrule	${\mathcal{Q}}(\alpha=0.5)$
				&IT&41&42&12&&&\\		
				&CPU&1.06&10.36&200.05&-&-&-\\		
				&RES&$9.60\cdot10^{-08}$&$9.81\cdot10^{-08}$&$9.99\cdot10^{-08}$&-&-&-\\		
				&ERR&$1.76\cdot10^{-05}$&$2.95\cdot10^{-04}$&$2.11\cdot10^{-03}$&-&-&-\\		
				${\mathcal{Q}}(\alpha=10)$
				&IT&37&12&11&-&-&-\\		
				&CPU&1.05&10.54&204.46 &-&-&-\\		
				&RES&$9.25\cdot10^{-08}$&$9.82\cdot10^{-08}$&$9.98\cdot10^{-08}$&-&-&-\\		
				&ERR&$2.32\cdot10^{-05}$&$2.58\cdot10^{-04}$&$2.12\cdot10^{-03}$&-&-&-\\	\hline	
					$\bar{\mathcal{P}}$
				&IT&33&21&34&-&-&-\\		
				&CPU&1.31&16.67&287.08&-&-&-\\		
				&RES&$9.53\cdot10^{-08}$&$9.98\cdot10^{-08}$&$9.98\cdot10^{-08}$&-&-&-\\		
				&ERR&$5.74\cdot10^{-05}$&$3.75\cdot10^{-04}$&$2.15\cdot10^{-03}$&-&-&-\\		
				\bottomrule
		\end{tabular}}
	\end{table}	
		\begin{figure}
		\begin{center}
			\includegraphics[height=5.5cm,width=6.5cm]{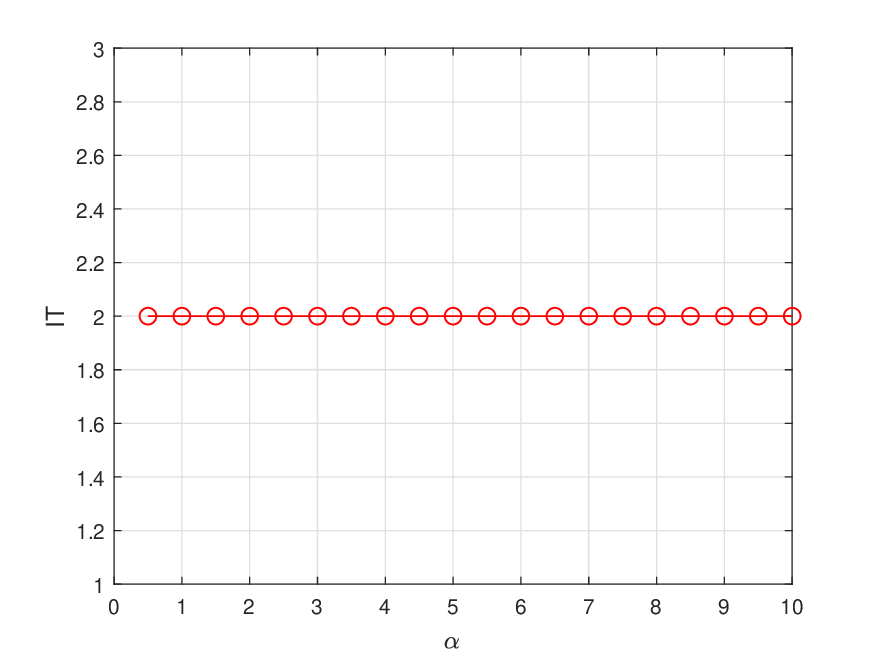}\includegraphics[height=5.5cm,width=6.5cm]{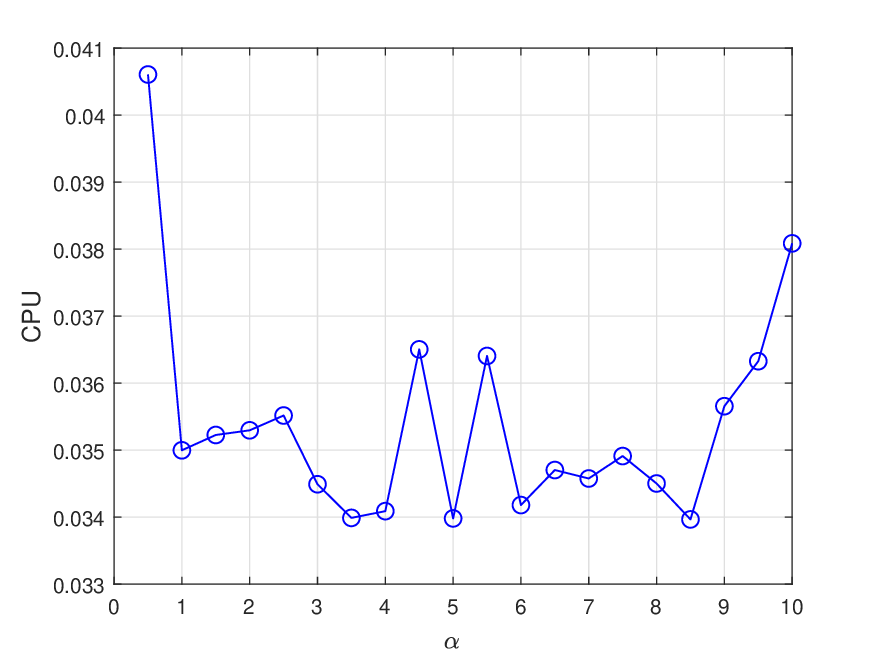}\\
		\end{center}
		\caption{{\small Variation of  iteration counts (IT) and  CPU time with respect to $\alpha$ for Example  \ref{EX1}}}. \label{EX1FIG} 	
	\end{figure}
	\begin{example}[\cite{Huang,Wang}]\label{EX2}\rm
		The problem at hand involves a $3\times 3$ block matrix, where the blocks $A$ and $B$ are derived from solving the Stokes equations given by
		\begin{equation*}
			\begin{dcases}
				-\Delta \mathbf{u}+\nabla p=0, & \text { in } \,\Omega, \\
				\nabla \cdot \mathbf{u}=0, & \text { in } \,\Omega,
			\end{dcases}
		\end{equation*} 
		for a two-dimensional lid-driven cavity within a square domain. The boundary conditions dictate zero flow on the sides and bottom of the domain, and a non-zero horizontal velocity on the lid. To generate the matrices $A$ and $B$, we utilize the IFISS software developed by Elman et al. \cite{IFISS} and employ the finite element method $Q2-P1$ on five grids with varying parameters. Since the block $B$ generated by the IFISS package is not full row rank, we drop the first two rows to obtain a full row rank matrix. We set $h= \frac{1}{16}, \ldots, \frac{1}{128}$. Additionally, we introduce $C$ to make the linear system ill-conditioned but not too sparse. This matrix has a specific form involving a diagonal matrix and a randomly generated matrix, where $l = m - 2$ and $\mathrm{randn}(l,m-l)$ is an $l-by-(m-l)$ matrix of normally distributed random numbers.  In this example, we replace the coefficient matrix $\mathcal{K}$ by
		the matrix $\mathcal{D}^{-\frac{1}{2}}\mathcal{K}\mathcal{D}^{-\frac{1}{2}}$, where
		$\mathcal{D}=\mathrm{diag}({\|\mathcal{K}_1\|},\dots,{\|\mathcal{K}_{n+m+l}\|})$
		and $\mathcal{K}_i$ is the $i$th column of the matrix $\mathcal{K}$.

		Tables \ref{tabl3} and \ref{tabl4} displayed numerical outcomes for Example \ref{EX2} using different $h$, with $Q2-P1$ FEM on  uniform grids. The results in these tables indicated that the suggested preconditioner performs much better than $\bar{\mathcal{P}}$ and $\mathcal{Q}$, as it requires fewer iterations and less CPU time.
		It should be noted that even with the change of {$\alpha$}, the superiority of the proposed method is still maintained. In other words, the suggested preconditioner is better than $\bar{\mathcal{P}}$.
		Furthermore, Figure  \ref{fi2} displays the eigenvalue distribution of matrices, $\mathcal{K}$ and $\mathcal{\hat{P}}^{-1}\mathcal{K}$ for $h=\frac{1}{16}$,
		where the clustering of eigenvalues resulting from the preconditioner 
		$\mathcal{\hat{P}}$
		 is readily observable.
		
		According to Figure \ref{EX2FIG}, as $\alpha$ increases from $0$ to $3$, the iteration count first drops from $14$ to $10$ and then rises to $13$; for $\alpha\geq 3$, the number of iterations remains nearly constant at $13$, indicating fast convergence that is largely independent of $\alpha$ for larger values.	The CPU time similarly decreases from $0.89$ to $0.62$, then exhibits minor fluctuations and stabilizes around $0.81$ for $\alpha\geq 3$, reflecting a consistent and uniform computational cost. This flat behavior for both metrics over a wide range of $\alpha$ (especially for $\alpha\geq 3$) clearly confirms the robustness of the algorithm with respect to the tuning parameter, demonstrating that the method's performance is not sensitive to the exact choice of $\alpha$.
		
		\begin{table}[!htp]
			\centering
			\caption{Numerical results of the preconditioned GMRES method for Example \ref{EX2}.}
			\setlength{\heavyrulewidth}{0.15em} 
			\setlength{\lightrulewidth}{0.08em} 
			\setlength{\cmidrulewidth}{0.08em}
			\begin{tabular}{llllll}
				\toprule
				&$h$ &$\frac{1}{16}$&$\frac{1}{32}$&$\frac{1}{64}$&$\frac{1}{128}$\\
				
				\midrule
				$\hat{\mathcal{P}}(\alpha=1.5)$
				&IT&44&64&109&174\\
				&CPU&0.04&0.10&0.61&9.13\\
				&RES&$6.83\cdot 10^{-08}$&$8.98\cdot 10^{-08}$&$9.67\cdot 10^{-08}$&$8.87\cdot 10^{-08}$\\
				&ERR&$8.28\cdot 10^{-07}$&$8.27\cdot 10^{-06}$&$1.84\cdot 10^{-05}$&$7.44\cdot 10^{-06}$\\
				$\hat{\mathcal{P}}(\alpha=5)$
				&IT&44&59&103&174\\
				&CPU&0.02&0.08&0.58&9.06\\
				&RES&$6.94\cdot 10^{-08}$&$9.84\cdot 10^{-08}$&$9.20\cdot 10^{-08}$&$9.15\cdot 10^{-08}$\\
				&ERR&$3.18\cdot 10^{-06}$&$1.23\cdot 10^{-05}$&$2.36\cdot 10^{-05}$&$5.88\cdot 10^{-06}$\\
				\midrule 
				${\mathcal{Q}}(\alpha=0.5)$
				&IT&40&58&93&171\\
				&CPU&0.10&0.49&3.25&41.49\\
				&RES&$9.33\cdot 10^{-08}$&$9.20\cdot 10^{-08}$&$9.42\cdot 10^{-08}$&$8.48\cdot 10^{-08}$\\
				&ERR&$1.36\cdot 10^{-06}$&$1.15\cdot 10^{-05}$&$2.44\cdot 10^{-05}$&$4.18\cdot 10^{-06}$\\
				${\mathcal{Q}}(\alpha=1.5)$
				&IT&41&59&92&171\\
				&CPU&0.06&0.42&3.10&41.61\\
				&RES&$7.09\cdot 10^{-08}$&$9.12\cdot 10^{-08}$&$9.91\cdot 10^{-08}$&$8.58\cdot 10^{-08}$\\
				&ERR&$8.33\cdot 10^{-07}$&$1.05\cdot 10^{-05}$&$2.44\cdot 10^{-05}$&$4.24\cdot 10^{-06}$\\
				${\mathcal{Q}}(\alpha=5)$
				&IT&42&60&92&171\\
				&CPU&0.08&0.39&3.10&40.28\\
				&RES&$8.48\cdot 10^{-08}$&$9.44\cdot 10^{-08}$&$9.57\cdot 10^{-08}$&$8.84\cdot 10^{-08}$\\
				&ERR&$1.07\cdot 10^{-06}$&$9.01\cdot 10^{-06}$&$2.31\cdot 10^{-05}$&$4.56\cdot 10^{-06}$\\
				\hline
				 $\bar{\mathcal{P}}$
				&IT&101&145&204&253\\
				&CPU&0.07&0.21&1.40&12.03\\
				&RES&$8.60\cdot 10^{-08}$&$9.92\cdot 10^{-08}$&$9.98\cdot 10^{-08}$&$8.81\cdot 10^{-08}$\\
				&ERR&$3.77\cdot 10^{-06}$&$5.22\cdot 10^{-04}$&$1.31\cdot 10^{-03}$&$3.56\cdot 10^{-03}$\\
				\bottomrule
			\end{tabular}
			\label{tabl3}
		\end{table}	
		\begin{table}[!htp]
			\centering
			\caption{Numerical results of the FGMRES method for Example \ref{EX2}.}
			\begin{tabular}{llllll}
				\toprule
				&$h$ &$\frac{1}{16}$&$\frac{1}{32}$&$\frac{1}{64}$&$\frac{1}{128}$\\
				\hline
				$\hat{\mathcal{P}}(\alpha=1.5)$
				&IT&10&9&11&11\\
				&CPU&0.04&0.17&0.93&4.11\\
				&RES&$8.03\cdot 10^{-08}$&$9.67\cdot 10^{-08}$&$3.77\cdot 10^{-08}$&$5.47\cdot 10^{-09}$\\
				&ERR&$8.04\cdot 10^{-06}$&$8.26\cdot 10^{-05}$&$8.08\cdot 10^{-05}$&$2.11\cdot 10^{-05}$\\
				$\hat{\mathcal{P}}(\alpha=5)$
				&IT&12&11&13&13\\
				&CPU&0.05&0.20&1.08&4.83\\
				&RES&$5.76\cdot 10^{-08}$&$4.24\cdot 10^{-08}$&$9.27\cdot 10^{-08}$&$9.51\cdot 10^{-08}$\\
				&ERR&$2.43\cdot 10^{-05}$&$1.24\cdot 10^{-04}$&$2.74\cdot 10^{-03}$&$6.18\cdot 10^{-03}$\\
				\hline
				 ${\mathcal{Q}}(\alpha=0.5)$
				&IT&40&9&36&30\\
				&CPU&0.09&0.21&2.69&19.53\\
				&RES&$9.34\cdot 10^{-08}$&$9.32\cdot 10^{-08}$&$9.57\cdot 10^{-08}$&$9.96\cdot 10^{-08}$\\
				&ERR&$1.40\cdot 10^{-06}$&$1.12\cdot 10^{-05}$&$2.61\cdot 10^{-05}$&$2.12\cdot 10^{-04}$\\
				${\mathcal{Q}}(\alpha=1.5)$
				&IT&41&10&36&32\\
				&CPU&0.04&0.16&2.13&22.03\\
				&RES&$6.97\cdot 10^{-08}$&$9.41\cdot 10^{-08}$&$9.95\cdot 10^{-08}$&$9.69\cdot 10^{-08}$\\
				&ERR&$8.09\cdot 10^{-07}$&$1.05\cdot 10^{-05}$&$2.81\cdot 10^{-05}$&$1.98\cdot 10^{-04}$\\
				${\mathcal{Q}}(\alpha=5)$
				&IT&42&12&37&28\\
				&CPU&0.03&0.16&1.88&23.58\\
				&RES&$8.44\cdot 10^{-08}$&$8.40\cdot 10^{-08}$&$9.19\cdot 10^{-08}$&$9.80\cdot 10^{-08}$\\
				&ERR&$1.06\cdot 10^{-06}$&$8.52\cdot 10^{-06}$&$2.37\cdot 10^{-05}$&$2.08\cdot 10^{-04}$\\
				\hline
				 $\bar{\mathcal{P}}$
				&IT&47&31&46&39\\
				&CPU&0.07&0.42&5.88&28.81\\
				&RES&$7.90\cdot 10^{-08}$&$9.79\cdot 10^{-08}$&$9.99\cdot 10^{-08}$&$9.98\cdot 10^{-08}$\\
				&ERR&$1.11\cdot 10^{-05}$&$3.83\cdot 10^{-04}$&$3.14\cdot 10^{-03}$&$2.53\cdot 10^{-03}$\\
				\bottomrule
			\end{tabular}
			\label{tabl4}
		\end{table}	
	
%
\begin{figure}[H]
	\centering
	\includegraphics[width=.7\linewidth]{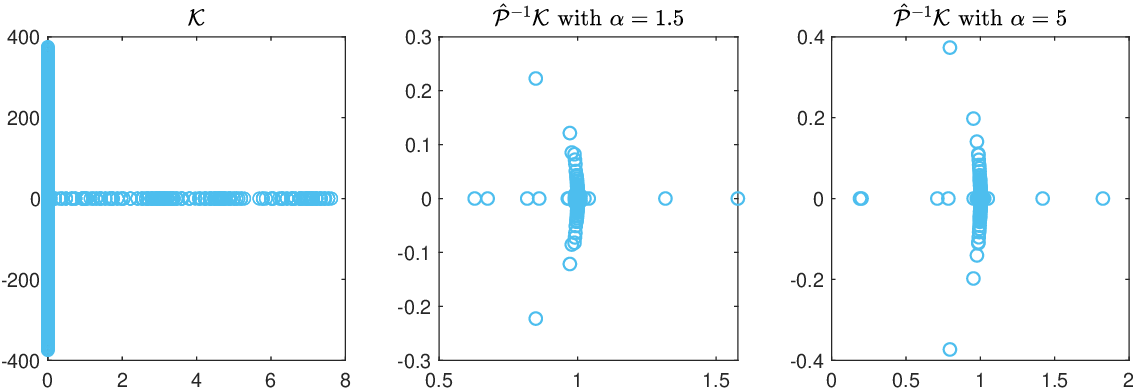}
	\caption{Eigenvalue distribution with $h = \frac{1}{16}$
	 for Example \ref{EX2}.}
	\label{fi2}
\end{figure}
\begin{figure}[H]
	\begin{center}
	\includegraphics[height=5.5cm,width=6.5cm]{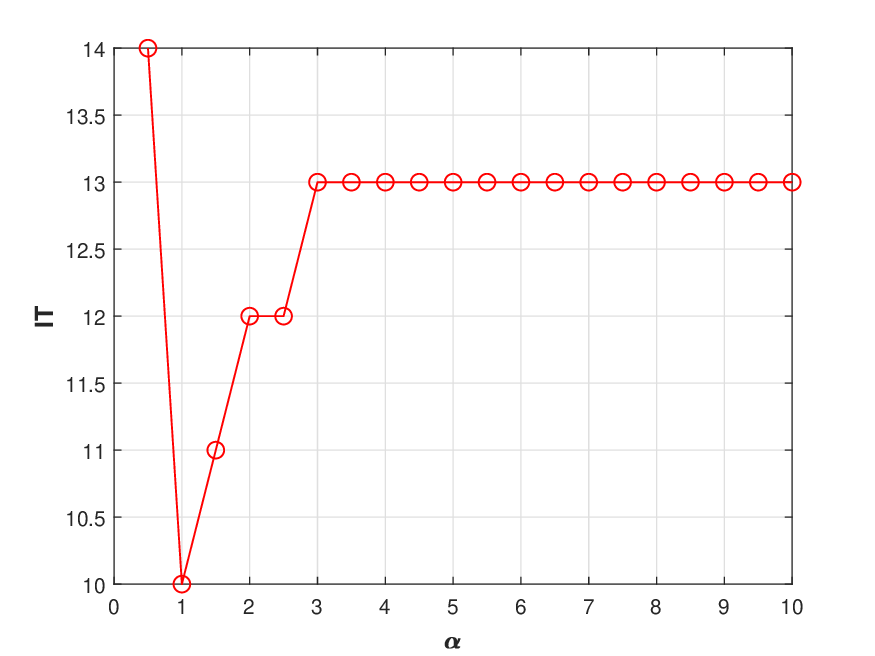}\includegraphics[height=5.5cm,width=6.5cm]{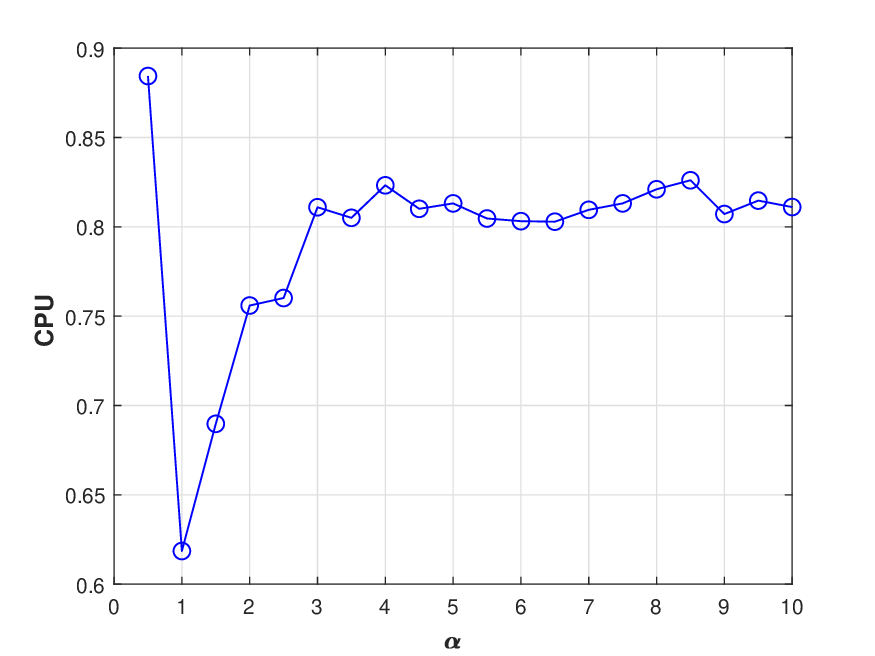}\\
\end{center}
\caption{{\small Variation of  iteration counts (IT) and  CPU time with respect to $\alpha$ for Example  \ref{EX2}}}. \label{EX2FIG} 	
\end{figure}
	\end{example}
	\begin{example}[\cite{Balani,Huang1}]\rm\label{EX3}
		We examine the three-by-three block saddle point problem  \eqref{EQ1} in which
		\[
		A=\mathrm{diag}(2W^{\top}W+D_1,D_2,D_3)\in \mathbb{R}^{n\times n},
		\]
		is a block-diagonal matrix. Additionally,
		$B=[E,-I_{2\tilde{p}},I_{2\tilde{p}}] \in \mathbb{R}^{m\times n} $ and $ C=E^{\top}\in \mathbb{R}^{l\times m}$
		are both full row-rank matrices, where $\tilde{p}=p^2,\hat{p}=p(p+1);W=(w_{i,j})\in \mathbb{R}^{\hat{p}\times \hat{p}}$
		with $w_{i,j}=e^{-2((i/3)^2)+(j/3)^2}; D_1=I_{\hat{P}}$ is an identity matrix; $D_{i}=\mathrm{diag}(d_j^{(i)})\in \mathbb{R}^{2\tilde{p}\times 2\tilde{p}}$, for $ i=2,3$ are diagonal matrices, with
		\begin{align*}
			d_j^{(2)}&=\begin{cases}
				1,&\text{for}\quad 1\leq j\leq \tilde{p},\\
				10^{-5}(j-\tilde{p})^2&\text{for}\quad \tilde{p}+1\leq j\leq 2\tilde{p},
			\end{cases}\\
			d_j^{(3)}&=5\cdot 10^{-11}(j+\tilde{p})^2\,\,\,\text{for}\quad 1\leq j\leq 2\tilde{p},
		\end{align*}
		\[
		E=\begin{pmatrix}
			\hat{E}\otimes I_p\\I_p\otimes \hat{E}
		\end{pmatrix},\quad 
		\hat{E}=\begin{pmatrix}
			2&-1&&&\\
			&2&-1&&\\
			&&\ddots&\ddots\\
			&&&2&-1
		\end{pmatrix}\in\mathbb{R}^{{p}\times (p+1)}.
		\]
		Tables \ref{tabl5} and  \ref{tabl6} present the numerical results for Example \ref{EX3}  using different values of $p$. The results show that the proposed preconditioner for the GMRES method outperforms $\bar{\mathcal{P}}$, requiring fewer iterations and less CPU time. It also performs slightly better than ${\mathcal{Q}}$. Additionally, when using the FGMRES method with the preconditioner  $\hat{\mathcal{P}}$, better CPU times are achieved compared to ${\mathcal{Q}}$.  Furthermore, Figure \ref{fi4} illustrates the eigenvalue distribution of the matrices $\mathcal{K}$ and $\mathcal{\hat{P}}^{-1}\mathcal{K}$ for $p=32$.

		Based on Fig. \ref{EX3FIG}, for $\alpha \geq 0.5$, the iteration count increases steadily and predictably from $12$ to $50$, with the CPU time rising proportionally, reflecting a well‑behaved and reliable convergence behaviour. This monotonic trend shows that the algorithm maintains stable and consistent performance across the entire parameter range, without any abrupt fluctuations or divergence.	The results confirm that the method is robust and flexible, allowing users to choose $\alpha$ based on their desired balance between computational cost and solution accuracy.
		
	\begin{table}[!htp]
	\centering
	\caption{Numerical results of the preconditioned GMRES method  for Example \ref{EX3}.}
	\setlength{\heavyrulewidth}{0.15em} 
	\setlength{\lightrulewidth}{0.08em} 
	\setlength{\cmidrulewidth}{0.08em}
	\begin{tabular}{llllll}
		\toprule
		&Size&8256&32896&131328&524800\\
		&$p$ &32&64&128&256\\
		\hline
		$\hat{\mathcal{P}}(\alpha=1.5)$
		&IT&24&19&16&16\\
		&CPU&0.06&0.10&0.35&1.74\\
		&RES&$6.25\cdot 10^{-08}$&$8.89\cdot 10^{-08}$&$8.21\cdot 10^{-08}$&$9.12\cdot 10^{-08}$\\
		&ERR&$8.29\cdot 10^{-05}$&$5.34\cdot 10^{-05}$&$4.19\cdot 10^{-05}$&$1.23\cdot 10^{-03}$\\
		$\hat{\mathcal{P}}(\alpha=5)$
		&IT&43&38&35&38\\
		&CPU&0.12&0.24&0.83&5.73\\
		&RES&$8.28\cdot 10^{-08}$&$9.26\cdot 10^{-08}$&$9.93\cdot 10^{-08}$&$8.47\cdot 10^{-08}$\\
		&ERR&$1.94\cdot 10^{-04}$&$1.58\cdot 10^{-04}$&$2.35\cdot 10^{-04}$&$6.27\cdot 10^{-04}$\\
		\hline ${\mathcal{Q}}(\alpha=0.5)$
		&IT&44&35&16&6\\
		&CPU&0.22&0.65&1.34&3.27\\
		&RES&$9.53\cdot 10^{-08}$&$9.88\cdot 10^{-08}$&$7.27\cdot 10^{-08}$&$5.63\cdot 10^{-08}$\\
		&ERR&$2.02\cdot 10^{-03}$&$1.82\cdot 10^{-03}$&$1.48\cdot 10^{-03}$&$1.05\cdot 10^{-03}$\\
		${\mathcal{Q}}(\alpha=1.5)$
		&IT&39&34&18&7\\
		&CPU&0.20&0.71&1.40&3.70\\
		&RES&$8.77\cdot 10^{-08}$&$9.15\cdot 10^{-08}$&$7.83\cdot 10^{-08}$&$6.87\cdot 10^{-08}$\\
		&ERR&$1.68\cdot 10^{-03}$&$1.40\cdot 10^{-03}$&$1.36\cdot 10^{-03}$&$1.63\cdot 10^{-03}$\\
		${\mathcal{Q}}(\alpha=5)$
		&IT&42&35&21&10\\
		&CPU&0.19&0.59&1.56&4.70\\
		&RES&$9.08\cdot 10^{-08}$&$9.51\cdot 10^{-08}$&$9.35\cdot 10^{-08}$&$5.38\cdot 10^{-08}$\\
		&ERR&$1.83\cdot 10^{-03}$&$1.60\cdot 10^{-03}$&$1.59\cdot 10^{-03}$&$1.12\cdot 10^{-03}$\\
		\hline $\bar{\mathcal{P}}$
		&IT&125&135&169&257\\
		&CPU&0.59&1.52&9.91&167.68\\
		&RES&$5.74\cdot 10^{-02}$&$4.94\cdot 10^{-05}$&$2.40\cdot 10^{-06}$&$3.35\cdot 10^{-06}$\\
		&ERR&$4.19\cdot 10^{-00}$&$1.01\cdot 10^{-03}$&$9.69\cdot 10^{-04}$&$1.35\cdot 10^{-02}$\\
		\bottomrule
	\end{tabular}
	\label{tabl5}
\end{table}	

\begin{table}[!htp]
	\centering
	\caption{Numerical results of the FGMRES method  for Example \ref{EX3}.}
	\begin{tabular}{llll}
		\toprule
		&Size&524800&2098177\\
		&$p$ &256&512\\
		\hline
		$\hat{\mathcal{P}}(\alpha=1.5)$
		&IT&16&7\\
		&CPU&4.89&8.14\\
		&RES&$8.98\cdot 10^{-08}$&$8.49\cdot 10^{-08}$\\
		&ERR&$1.11\cdot 10^{-03}$&$1.18\cdot 10^{-02}$\\
		
		\hline
		 ${\mathcal{Q}}(\alpha=0.5)$
		&IT&6&4\\
		&CPU&45.69&39.59\\
		&RES&$6.35\cdot 10^{-08}$&$3.14\cdot 10^{-08}$\\
		&ERR&$1.42\cdot 10^{-03}$&$5.37\cdot 10^{-03}$\\
		\hline
		${\mathcal{Q}}(\alpha=1.5)$
		&IT&7&5\\
		&CPU&56.12&88.83\\
		&RES&$7.00\cdot 10^{-08}$&$8.78\cdot 10^{-09}$\\
		&ERR&$1.64\cdot 10^{-03}$&$1.70\cdot 10^{-03}$\\
		\bottomrule
	\end{tabular}
	\label{tabl6}
\end{table}	

\begin{figure}[H]
	\centering
	\includegraphics[width=.7\linewidth]{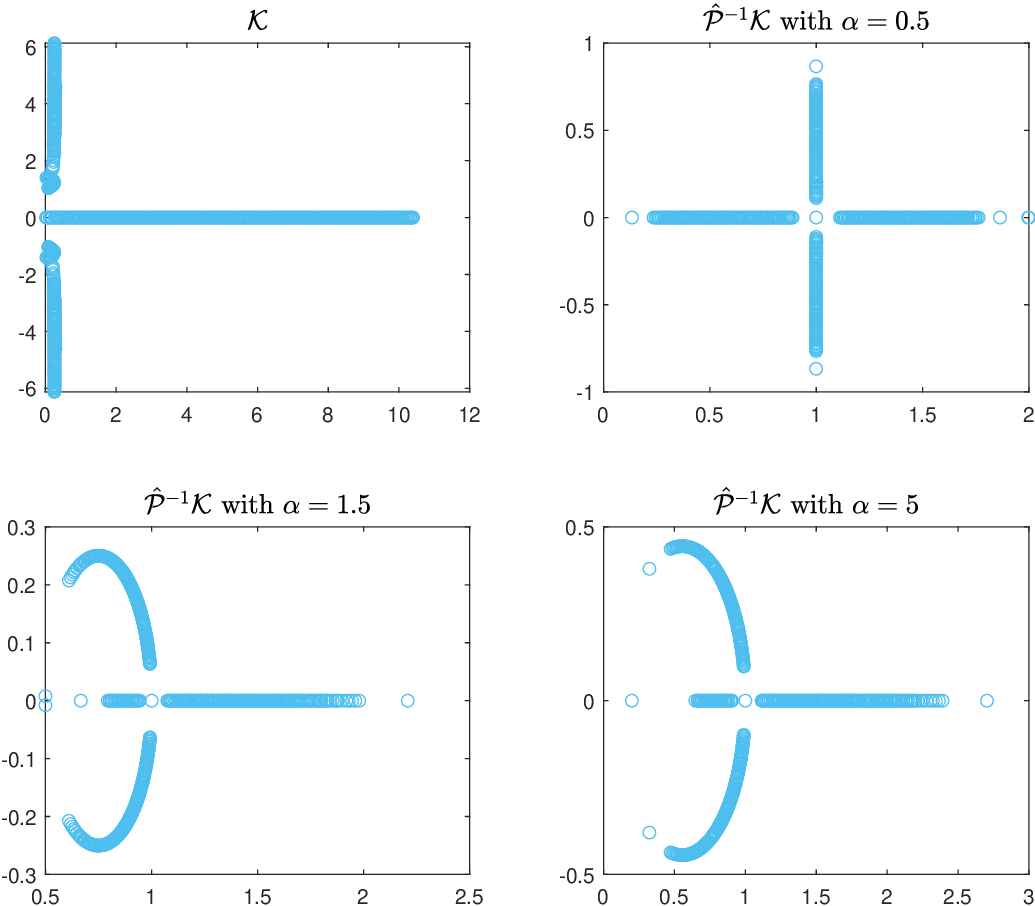}
	\caption{Eigenvalue distribution with $p=32$ for Example \ref{EX3}.}
	\label{fi4}
\end{figure}

\begin{figure}[H]
	\begin{center}
		\includegraphics[height=5.5cm,width=6.5cm]{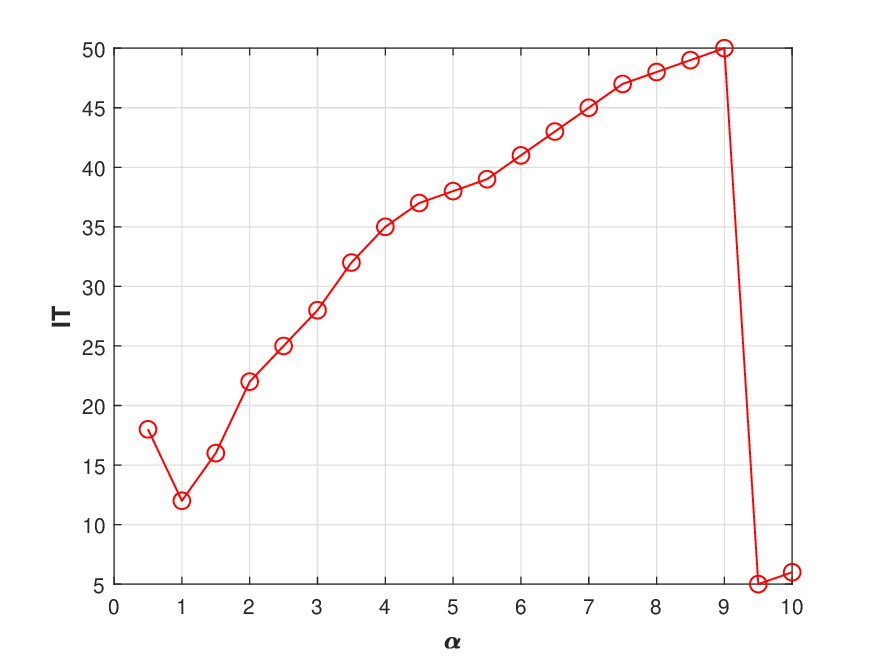}\includegraphics[height=5.5cm,width=6.5cm]{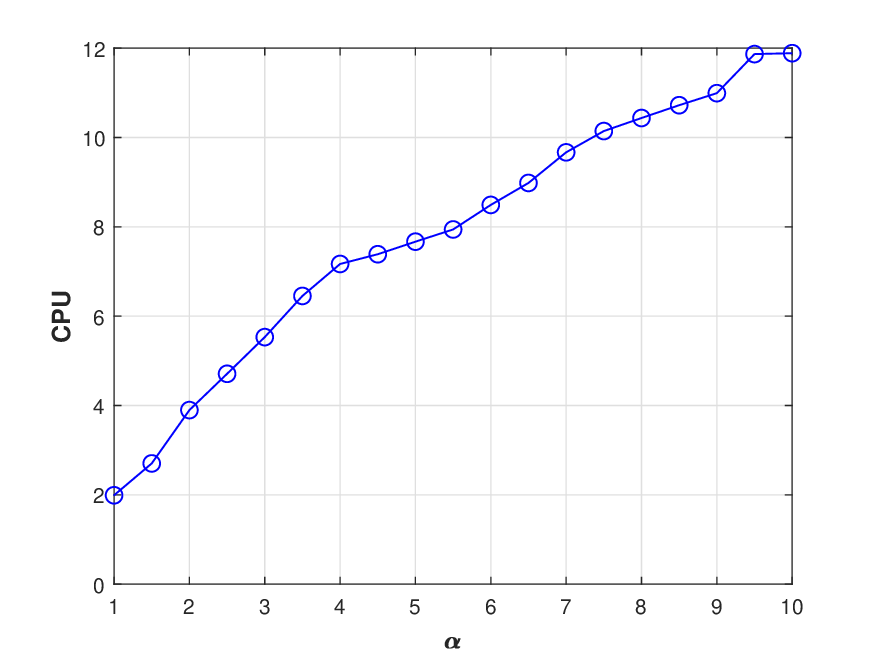}\\
	\end{center}
	\caption{{\small Variation of  iteration counts (IT) and  CPU time with respect to $\alpha$ for Example  \ref{EX3}}}. \label{EX3FIG} 
\end{figure}
	\end{example}	
	
	\section{Conclusions and possible extensions for future work} \label{Sec5}
	
	A biparametric iterative method has been  proposed based on the LU factorization of the coefficient matrix for three-by-three saddle point problems. To ensure convergence, certain provisions have been imposed, with the note that convergence depends on only one of the involved parameters.  By exploiting the induced preconditioner, corresponding eigenpairs of the preconditioned matrix have been derived. The performance of the preconditioner appears to be satisfactory, as indicated by the numerical results.

	The theoretical and numerical results presented in this paper demonstrate the effectiveness of the proposed two-parameter preconditioning framework for a class of three-by-three saddle point problems. Nevertheless, several research directions remain open and deserve further investigation. Some possible directions for future work are listed below:
	
	\begin{itemize}
		\item 
		
		Extending the proposed preconditioner to more general block-structured linear systems with multiple saddle-point constraints and more complex coupling structures.
		
		\item 
		
		Developing an adaptive strategy for selecting the parameters $\alpha$ and $\beta$ based on spectral information, convergence behaviour, or inexpensive estimates of relevant matrix properties. Such a strategy could reduce the need for manual parameter tuning and further improve the efficiency of the proposed preconditioner.

		\item
		
		Investigating parallel implementations of the proposed preconditioning framework for large-scale saddle point problems, with the aim of reducing the computational cost and improving the scalability of the proposed method for large-scale applications.
		
	\end{itemize}
	
		We believe that these directions may further broaden the scope of the proposed preconditioning strategy and provide useful insights for the development of efficient solvers for large-scale saddle point systems.

	\section{Declaration}
	There is no conflict of interest.
	
		\section{Funding}
	The authors declare no funding for this work.
		\section{Data Availability}
The datasets used and/or analysed during the current study are available from the corresponding author on reasonable request.

	\section{Acknowledgment}
	We greatly appreciate  eight anonymous reviewers' insightful feedback, which helped us clarify our arguments and improve the overall presentation.

\end{document}